\title{Limitations of deducing measures of limsup sets from measures of finite intersections}
\author{Charlie Wilson}
\date{}
\begin{document}
\setstcolor{orange}
\maketitle

\newtheorem{lemma}{Lemma}[section]
\newtheorem*{definition}{Definition}
\newtheorem*{remark}{Remark}
\newtheorem{theorem}{Theorem}[section]
\newtheorem*{theorem*}{Theorem}
\newtheorem{corollary}{Corollary}[section]
\newtheorem{prop}{Proposition}[section]
\newtheorem{question}{Question}

\newtheorem*{theoremKS*}{Kochen--Stone Theorem (1964)}
\newtheorem*{theoremSV*}{Theorem BV1}
\newtheorem*{theoremBCC*}{Convergence Borel--Cantelli Lemma (1909)}
\newtheorem*{theoremBCD*}{Divergence Borel--Cantelli Lemma (1909)}
\newtheorem*{theoremFrolov*}{Theorem Frolov 2012}

\begin{abstract}
   Early results by Borel and Cantelli and Chung (Rendiconti del Circolo Matematico di Palermo, 1909) and Erd\H{o}s (Transactions of the AMS, 1952) have provided bounds for the measure of a limsup set in terms of measures of its constituent sets and their intersections. Recent work by Beresnevich and Velani (Journal of Mathematical Analysis and its Applications, 2023) states that for sequences of balls the measure of the corresponding limsup set being positive is equivalent to a condition on the relationship between the measures of these balls and their pairwise intersections. In this paper, we show that the condition that the sets are balls is strictly necessary in the result of Beresnevich and Velani. Moreover, let $d \in \mathbb{N}$ and let $[0,1]^d$ be equipped with Lebesgue measure $\mu$. Fix $m \in \mathbb{N}$. When we drop the condition that the sets are balls, we can find two sequences of sets $(A_i)_{i \in \mathbb{N}}$ and $(B_i)_{i \in \mathbb{N}}$ in $[0,1]^d$ such that $\mu(A_i)=\mu(B_i)$ for all $i \in \mathbb{N}$ and for any sequence $(i_1,i_2,...,i_l)$ where for all $l \leq m$ we have $\mu(A_{i_1}\cap A_{i_2} \cap... \cap A_{i_l})=\mu(B_{i_1}\cap B_{i_2} \cap... \cap B_{i_l})$ but $\mu(\limsup_{i \rightarrow \infty} A_i)=1$ and $\mu(\limsup_{i \rightarrow \infty} B_i)=0$.
  
\end{abstract}

\subsubsection*{Keywords}
Measure theory, Borel--Cantelli Lemma, Diophantine approximation, Probability theory

AMS subject classification
28A75 
11K60,
37A50.

\subsubsection*{Acknowledgements} 
Thanks to Mark Holland and Demi Allen for the ongoing support and help with the project. Also, thanks to Felipe Ram\'{\i}rez for the advice and suggestions. Thanks also to the backing of DWN, CBR and HBY. This research was funded by the University of Exeter. For the purpose of open access, the author has applied a Creative Commons Attribution (CC BY) licence to any Author Accepted Manuscript version arising from this submission.

\section{Introduction}
Let $(\Omega, \mathscr{A}, \mu)$ be a probability space and let $(A_i)_{i \in \mathbb{N}}$ be a sequence of $\mu$-measurable sets in~$\Omega$.  The \textit{limsup} of the sequence $(A_i)_{i \in \mathbb{N}}$ is defined as:

\[ \limsup_{i \rightarrow \infty} A_i= \bigcap_{n \geq 1}\bigcup_{i \geq n} A_i= \{x \in \Omega : x \in A_i \text{ for infinitely many } i \in \mathbb{N} \}  ~.\]

In areas such as metric number theory, Diophantine approximation, and shrinking target problems in dynamical systems, we often wish to determine the measures of limsup sets. In many situations, the Borel--Cantelli Lemmas can help to do just this (see statements of the lemmas below). Statements and proofs of the Borel--Cantelli Lemmas can be found in many probability textbooks, see, for example, \cite{billingsley1995probability}. 

\begin{theoremBCC*}\label{BCconv}
Let $(\Omega, \mathscr{A}, \mu)$ be a probability space and let $(A_i)_{i \in \mathbb{N}}$ be a sequence of $\mu$-measurable sets in $\Omega$. Suppose that $\sum_{i=1}^{\infty} \mu(A_i)<\infty$, then $\mu(\limsup_{i \rightarrow \infty} A_i)=0$.
\end{theoremBCC*}

\begin{theoremBCD*}\label{BCdiv}
Let $(\Omega, \mathscr{A}, \mu)$ be a probability space and let $(A_i)_{i \in \mathbb{N}}$ be a sequence of $\mu$-measurable sets in $\Omega$. Suppose further that $A_i$ and $A_j$ are pairwise independent, i.e. $\mu(A_i \cap A_j)=\mu(A_i)\mu(A_j)$ for all $i,j \in \mathbb{N}$ with $i \neq j$. If $\sum_{i=1}^{\infty} \mu(A_i)=\infty$, then we have $\mu(\limsup_{i \rightarrow \infty} A_i)=1$.
\end{theoremBCD*}

This latter result was further extended by Chung and Erd\H{o}s \cite{ErdosChungpaper}, Erd\H{o}s and Renyi \cite{ErdosRenyi}, and Kochen and Stone \cite{kochenstone} to yield the following:

\begin{theoremKS*}\label{Kochen stone theorem}
Let $(\Omega, \mathscr{A}, \mu)$ be a probability space and let $(A_i)_{i \in \mathbb{N}}$ be a sequence of $\mu$-measurable sets in $\Omega$. Suppose that $\sum_{i=1}^{\infty} \mu(A_i)=\infty$. Then
    \begin{equation} \label{quasi independence}
        \mu\left(\limsup_{i \rightarrow \infty} A_i \right) \geq \limsup_{n \rightarrow \infty} \frac{\left( \sum_{s=1}^n \mu(A_s) \right)^2}{\sum_{s,t=1}^n \mu(A_s \cap A_t)} ~.
    \end{equation} 
\end{theoremKS*}

The latter result is indeed a strengthening of the Divergence Borel--Cantelli Lemma, as the expression on the right hand side of~(\ref{quasi independence}) will equal $1$ if the $A_i$ are pairwise independent. Despite this result being an improvement, there are examples where the measure of the limsup of $A_i$ is 1 but the right-hand side of the inequality is 0 and so the bound can be ineffective. Some other approaches have been taken recently to find new bounds for finding the measure of the limsup set. For example, Feng, Li and Shen provided a weighted version of the Kochen--Stone Theorem in \cite{weightedBCpaper}. Petrov \cite{Petrov04} took a variational principle approach to finding a bound. Readers interested in a full history of these results are referred to \cite{Frolov2012}.

In the case where the constituent sets of the sequence are balls, recent work by Beresnevich and Velani \cite{Velanipaper} provides necessary and sufficient conditions for a limsup set to have strictly positive measure. We begin with some notation and definitions before stating their result. For a Borel probability measure $\mu$ on the space $\Omega$, we let supp($\mu$) denote the support of the measure $\mu$. Furthermore given $x \in \Omega$ and $r > 0$, we denote the ball centred at $x$ of radius $r$ to be $B(x,r)$. Given a real number $a > 0$, we denote by $aB$ the ball $B$ scaled by a factor $a$, that is
$aB := B(x, ar)$. A measure $\mu$ is said to be \textit{doubling} if there are constants $\lambda \geq 1$ and $r_0 >0$ such that for any $x \in \text{supp}(\mu)$ and $0 <r<r_0$ we have
\[\mu(B(x,2r)) \leq \lambda \mu(B(x,r)).\]

The following result is Theorem 3 from \cite{Velanipaper}, which provides a very useful technique for deducing that a limsup set of balls has positive measure.

\begin{theoremSV*}
\label{3rd Velani result}
     Let $(\Omega, \mathscr{A}, \mu, d)$ be a metric measure space equipped with a doubling Borel probability measure $\mu$. Let $ (B_i)_{i \in \mathbb{N}} $ be a sequence of balls in $\Omega$ such that there exist constants $a,b>1 $ such that $\mu(aB_i) \leq b \mu(B_i) $ holds for all $i$ sufficiently large. Then,
     \[\mu\left(\limsup_{i \rightarrow \infty} B_i \right)>0\]
     if and only if there exists a subsequence $( B_{i_n} )_{n\in \mathbb{N}}$ of $( B_i )_{i\in \mathbb{N}}$ and a constant $C>0$ such that
      \[\sum_{n=1}^{\infty} \mu(B_{i_n})=\infty \]
      and for infinitely many $Q \in \mathbb{N}$ we have 
     \[ \sum_{s,t=1}^{Q} \mu(B_{i_s} \cap B_{i_t}) \leq C \left(\sum_{s=1}^{Q} \mu(B_{i_s})\right)^2 ~.\]
    
\end{theoremSV*}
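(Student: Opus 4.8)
The statement is an ``if and only if,'' and I would prove the two implications separately, with the reverse implication being essentially immediate and the forward implication carrying all of the difficulty.

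For the direction asserting that the quasi-independence condition is \emph{sufficient} for positive measure, I would simply invoke the Kochen--Stone Theorem applied to the subsequence $(L_i)$. Since $\sum_i \mu(L_i) = \infty$, Kochen--Stone gives
\[
\mu\!\left(\limsup_{i\to\infty} L_i\right) \;\ge\; \limsup_{Q\to\infty} \frac{\left(\sum_{s=1}^{Q}\mu(L_s)\right)^2}{\sum_{s,t=1}^{Q}\mu(L_s\cap L_t)}.
\]
Restricting the outer $\limsup$ to the infinitely many $Q$ for which $\sum_{s,t=1}^Q \mu(L_s\cap L_t)\le C(\sum_{s=1}^Q\mu(L_s))^2$ holds, each such quotient is at least $1/C$, so $\mu(\limsup_i L_i)\ge 1/C>0$. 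As $(L_i)$ is a subsequence of $(B_i)$ we have $\limsup_i L_i\subseteq\limsup_i B_i$, whence $\mu(\limsup_i B_i)>0$. Note that this direction uses neither the ball hypothesis nor doubling.

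For the forward (\emph{necessary}) direction, write $W=\limsup_i B_i$ and assume $\mu(W)>0$. The plan is to construct the subsequence as a concatenation of finite ``blocks'' $\mathcal F_1,\mathcal F_2,\dots$ of pairwise disjoint balls, each of total measure bounded below by a fixed $\delta>0$. To build one block from index $n$ onwards, I use that $\{B_i:i\ge n\}$ covers $W$ (every point of $W$ lies in infinitely many $B_i$); by the $5r$-covering (Vitali) lemma I extract a disjoint subfamily whose $5$-dilates still cover $W$, so that $\mu(W)\le\sum_j\mu(5B_{i_j})$. Here the hypotheses enter: combining the doubling property with the condition $\mu(aB_i)\le b\mu(B_i)$ yields a \emph{uniform} constant $K$ with $\mu(5B_{i_j})\le K\mu(B_{i_j})$ for all large $i_j$, giving $\sum_j\mu(B_{i_j})\ge\mu(W)/K$; truncating to a finite subfamily produces a block $\mathcal F$ with $\delta\le\sum_{B\in\mathcal F}\mu(B)$ for $\delta:=\mu(W)/(2K)$. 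Taking the starting index of the next block to exceed every index used so far makes the concatenation a genuine subsequence with divergent measure sum.

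It remains to verify the quasi-independence bound at the indices $Q_K$ marking the end of each block. Writing $S_k=\sum_{B\in\mathcal F_k}\mu(B)$, disjointness within a block and the elementary estimate $\sum_{B\in\mathcal F_k,B'\in\mathcal F_{k'}}\mu(B\cap B')\le\min(S_k,S_{k'})\le\sqrt{S_kS_{k'}}$ (the first inequality since, for a fixed ball $B'$, disjointness of $\mathcal F_k$ gives $\sum_{B\in\mathcal F_k}\mu(B\cap B')=\mu(B'\cap\bigcup\mathcal F_k)\le\mu(B')$) together give
\[
\sum_{s,t=1}^{Q_K}\mu(L_s\cap L_t)\;\le\;\Big(\sum_{k=1}^{K}\sqrt{S_k}\Big)^{2}\;\le\;\frac1\delta\Big(\sum_{k=1}^{K}S_k\Big)^{2}\;=\;\frac1\delta\Big(\sum_{s=1}^{Q_K}\mu(L_s)\Big)^{2},
\]
where the middle step uses $\sqrt{S_k}\le S_k/\sqrt\delta$. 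Thus the condition holds with $C=1/\delta$ at the infinitely many $Q_K$. I expect the main obstacle to be precisely this covering step: ensuring the dilation constant $K$ is uniform over the whole sequence (this is exactly what the doubling and $\mu(aB_i)\le b\mu(B_i)$ hypotheses are for, and why the ball structure is essential), and handling the technical requirement that the $5r$-covering lemma needs uniformly bounded radii, which one arranges by restricting to balls of radius below the doubling threshold $r_0$.
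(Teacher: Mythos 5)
This statement is quoted verbatim as Theorem 3 of \cite{Velanipaper} and is not proved in the paper, so there is no in-text proof to compare against. Your argument is essentially the one given in that reference: Kochen--Stone applied to the subsequence plus $\limsup_i L_i\subseteq\limsup_i B_i$ for sufficiency, and for necessity the extraction of finite disjoint blocks via the $5r$-covering lemma (with the dilation bound $\mu(5B)\le K\mu(B)$ obtained by iterating $\mu(aB_i)\le b\mu(B_i)$) followed by the estimate $\sum_{B\in\mathcal F_k,B'\in\mathcal F_{k'}}\mu(B\cap B')\le\sqrt{S_kS_{k'}}$ and $S_k\ge\delta$; this is sound, and the only points needing the extra care you already flag are the bounded-radius hypothesis in the covering lemma (one must treat separately, or rule out by passing to a subsequence, the case where the radii do not tend to zero) and starting the block construction late enough that $\mu(aB_i)\le b\mu(B_i)$ is in force.
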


     Moreover, we note that in some cases it is sufficient to show that the measure of the limsup of the sequence of sets is positive to show that the limsup set has full measure. This is because there are several `zero-one' laws that state that provided the sequence of sets has additional structure (for example the sets can be seen as a tail event), the measure of the limsup can only take the values $0$ or $1$. See, for example, \cite{billingsley1995probability} for Kolmogorov's zero-one law, \cite{cassels} for Cassels zero-one law, and \cite{Gallagher} for Gallaghers zero-one law. 
    
    Another way of reformulating Theorem BV1 is as follows

    \begin{corollary}\label{the corollary}
        Let $(\Omega, \mathscr{A}, \mu, d)$ be a metric measure space equipped with a doubling Borel probability measure $\mu$. Let $(A_i)_{i \in \mathbb{N}}$ and $ (B_i)_{i \in \mathbb{N}} $ be sequences of balls in $\Omega$ such that there exist constants $a_1,a_2,b_1,b_2>1 $ such that $\mu(a_1B_i) \leq b_1 \mu(B_i) $ and $\mu(a_2B_i) \leq b_2 \mu(B_i) $  hold for all $i$ sufficiently large.
        Suppose that
        \begin{equation}\label{measure equal in cor}
            \begin{split}
    \mu (A_i) &= \mu (B_i),  \quad  \text{for all}~ i \in \mathbb{N}, \text{ and} \\
     \mu (A_i \cap A_j) &= \mu (B_i \cap B_j),  \quad \text{for all} ~ i, j \in \mathbb{N}.
    \end{split}
        \end{equation}
        Then,
        \begin{align*}
            \mu \left( \limsup_{i\rightarrow \infty} B_i \right) >0 \implies \mu \left(\limsup_{i \rightarrow \infty}A_i \right)>0.
        \end{align*}
    \end{corollary}

    This raises the question of whether the condition that the sets in the sequences $(A_i)_{i \in \mathbb{N}}$ and $(B_i)_{i \in \mathbb{N}}$ are balls is strictly necessary for the conclusion to hold.

\begin{question} \label{Q}
Consider two sequences of sets, $(A_i)_{i \in \mathbb{N}}$ and $(B_i)_{i \in \mathbb{N}}$ in $\Omega$, whose measures are such that
\begin{equation}
\label{the very first intersection requirement}
\begin{split}
    \mu (A_i) &= \mu (B_i),  \quad  \text{for all}~ i \in \mathbb{N}, \text{ and} \\
     \mu (A_i \cap A_j) &= \mu (B_i \cap B_j),  \quad \text{for all} ~ i, j \in \mathbb{N}.
    \end{split}
\end{equation}
Is it true that $\mu(\limsup_{i\rightarrow \infty} B_i)>0  \implies  \mu(\limsup_{i \rightarrow \infty} A_i)>0$ ?
\end{question}

We give examples answering Question \ref{Q} in the negative and investigate the consequences if we ask for control of measures of some of the higher intersections. We note that the requirements in Theorem BV1 of the space being doubling and the enlargement condition on the sequences are fairly mild, and so we neglect them in the question for brevity. Indeed, the examples rendering the answer to the question negative satisfy these requirements. Our first main result is the following, which answers Question \ref{Q} for the case $m=2$ and also gives answers for the question fixing the measures of higher intersections.

\begin{theorem} \label{the equality theorem}
    Fix $m\in \mathbb{N}$ and $c \in [0,1]$ and let $\mu$ denote Lebesgue measure. Then there exist sequences $(A_i)_{i \in \mathbb{N}}$ and $(B_i)_{i \in \mathbb{N}}$ of Borel subsets of $[0,1]$ satisfying:
\begin{enumerate}[label=(\roman*)]
    \item $\mu (A_i)  = \mu (B_i)$  for all $i \in \mathbb{N}$, 
     \item For any sequence of natural numbers $i_1,i_2,...,i_l$ with $1 \leq l \leq m$, we have 
     \[\mu(A_{i_1} \cap ... \cap A_{i_l})  = \mu(B_{i_1} \cap ... \cap B_{i_l}),\]
     \item $\mu(\limsup_{i \rightarrow \infty} A_i) =0$, and
     \item $\mu(\limsup_{i \rightarrow \infty} B_i) =c$.
    \end{enumerate}

\end{theorem}

    Going further, the answer to Question $1$ is still `no' even if we replace the conditions of the equality of measures of sets and measures of intersections with relevant strict inequalities. We see the motivating Corollary \ref{the corollary} would remain true in the case we replaced \eqref{measure equal in cor} with $\mu(A_i)>\mu(B_i) $ for all $i \in \mathbb{N}$ and $\mu(A_i \cap A_j) <\mu(B_i \cap B_j)$ for all distinct $i,j \in \mathbb{N}$. This is indeed a problem of a distinct nature, and the examples used to answer Question 1 in the case of \eqref{the very first intersection requirement} containing equalities or inequalities are very different.

As in the case for equalities, we go further and consider the problem in which we control the measures of several intersections of distinct sets. Unlike in the equality case, we are required to choose the appropriate direction for the inequalities. We can observe a heuristic for this by considering the inclusion exclusion principle and rewriting the measure of the $\limsup$ set as the limit of the measures of sets. To this end, by the continuity of measures, we may express the measure of the limsup of a sequence of sets as
\begin{align}
  \mu\left(\limsup_{i \rightarrow \infty} A_i \right) = \lim_{n\rightarrow \infty} \lim_{m\rightarrow \infty} \mu \left(\bigcup_{i = n}^m A_i \right) \label{limit union} ~.
\end{align}
 Therefore, to control the measure of the limsup set it is sufficient to control the measure of finite unions of the constituent sets. By the inclusion exclusion principle (see, for example, Chapter 2 of \cite{billingsley1995probability}) we see that
\begin{align}\label{inclusionexclusion}
    \mu \left(\bigcup_{i=n}^{m} A_i \right) = \sum_{k=1}^{m-n+1} (-1)^{k+1} \sum_{n \leq i_1 < \cdots < i_k \leq m} \mu ( A_{i_1} \cap \cdots \cap A_{i_k} ) ~.
\end{align}
        The parity of the coefficients on the right-hand side will be used to determine which quantities we would heuristically want to make smaller and larger to increase the measure of the limsup of the sequence $(A_i)_{i \in \mathbb{N}}$. If the number of distinct sets being intersected is even, then the coefficient is negative, and we want these to be small, so that the measure of the limsup is larger. Similarly, if the number of distinct sets in the intersection is odd, then the coefficient is positive, and we would want these to be large to increase the measure of the limsup. This justifies our choice of inequalities in $(ii)$ of Theorem \ref{the big theorem} stated next. We remark this theory is consistent with the Kochen--Stone Theorem and results by Frolov \cite{Frolov2012}.

\begin{theorem} \label{the big theorem}
 Fix $m\in \mathbb{N}$ and $c \in [0,1]$ and let $\mu$ denote Lebesgue measure. Then there exist sequences $(A_i)_{i \in \mathbb{N}}$ and $(B_i)_{i \in \mathbb{N}}$ of Borel subsets of $[0,1]$ satisfying:
    \begin{enumerate}[label=(\roman*)]
    \item $\mu (A_i)  > \mu (B_i)$  for all $i \in \mathbb{N}$, 
     \item For any sequence of natural numbers $i_1,i_2,...,i_l$ with $1 \leq l \leq m$, we have \[(-1)^l\mu(A_{i_1} \cap ... \cap A_{i_l}) < (-1)^l \mu(B_{i_1} \cap ... \cap B_{i_l}),\] 
     \item $\mu(\limsup_{i \rightarrow \infty} A_i) =0$, and
     \item $\mu(\limsup_{i \rightarrow \infty} B_i) =c$.
    \end{enumerate}

\end{theorem}

We see from both Theorem \ref{the big theorem} and Theorem \ref{the equality theorem} that controlling the measures of arbitrary $l$-wise intersections between sets (for $1 \leq l \leq m $) does not necessarily fix the measure of the limsup set. The work of Frolov \cite{Frolov2012} and references therein give improvements on the Borel--Cantelli lemmas by controlling the measure of triple and quadruple intersections.  One of the consequences of Theorem $\ref{the equality theorem}$ is that for any natural number $m$ we cannot hope to form a precise formula for the measure of the limsup of a sequence of sets, dependent only on the measures of the $l$-wise intersections of the sets for $l \leq m$. In our proof of Theorem \ref{the equality theorem} we give examples in which we have two sequences with identical $l$ wise intersection measures for $l \leq m$, yet the measure of the limsup set of the two sequences can be vastly different. Hence, there is a fundamental geometric aspect of limsup sets for which these finite intersection measures are too crude to recognise.

For completeness, we observe that if, on the other hand, $(A_i)_{i \in \mathbb{N}}$ and $(B_i)_{i \in \mathbb{N}}$ are sequences of sets in $(\Omega, \mathcal{A},\mu)$ and we insist that $\mu (A_{i_1} \cap A_{i_2} \cap ... \cap A_{i_l})=\mu (B_{i_1} \cap B_{i_2} \cap ... \cap B_{i_l})$ for \textbf{every} length sequence $l \in \mathbb{N}$, then $\mu(\limsup_{i \rightarrow\infty} A_i)=\mu(\limsup_{i \rightarrow \infty} B_i)$.

\begin{theorem} \label{fix everything}
     Let $(\Omega_1, \mathscr{A}_1, \mu_1)$ and  $(\Omega_2, \mathscr{A}_2, \mu_2)$ be two (potentially distinct) probability spaces. Let $(A_i)_{i\in \mathbb{N}}$ and $(B_i)_{i\in \mathbb{N}}$ be sequences of measurable sets in $\Omega_1$ and $\Omega_2$ respectively. Suppose $\mu_1(\limsup_{i \rightarrow \infty} A_i)=c$ with $0 \leq c \leq 1$. 
     Furthermore, suppose that for any $k \in \mathbb{N}$ and indices $i_1,i_2,...,i_k \in \mathbb{N}$ we have $\mu_1 (A_{i_1} \cap A_{i_2} \cap ... \cap A_{i_k})=\mu_2 (B_{i_1} \cap B_{i_2} \cap ... \cap B_{i_k})$.
     Then, $\mu_2 (\limsup_{i \rightarrow \infty} B_i)=c$.
\end{theorem}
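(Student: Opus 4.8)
The plan is to reduce the measure of a limsup set to a double limit of measures of finite unions, and then to observe that each such finite-union measure is determined, via inclusion--exclusion, entirely by the measures of finite intersections that the hypothesis fixes. Concretely, write $U_n^N = \bigcup_{i=n}^N A_i$ and $V_n^N = \bigcup_{i=n}^N B_i$. The key algebraic fact is the finite inclusion--exclusion identity
\begin{equation*}
\mu_1(U_n^N) = \sum_{\emptyset \neq S \subseteq \{n,\dots,N\}} (-1)^{|S|+1} \mu_1\!\left(\bigcap_{i \in S} A_i\right),
\end{equation*}
together with its analogue for $\mu_2(V_n^N)$. Since every term on the right-hand side is the measure of a finite intersection, the hypothesis $\mu_1(A_{i_1}\cap\cdots\cap A_{i_k}) = \mu_2(B_{i_1}\cap\cdots\cap B_{i_k})$ (which crucially holds for \emph{all} finite lengths $k$, not merely $k \le m$) gives a term-by-term equality, and hence $\mu_1(U_n^N) = \mu_2(V_n^N)$ for every pair $n \le N$.

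The second ingredient is continuity of measure, available because both spaces have finite total mass. For fixed $n$, the sets $U_n^N$ increase with $N$ to $\bigcup_{i \ge n} A_i$, so continuity from below gives $\mu_1(\bigcup_{i\ge n} A_i) = \lim_{N\to\infty}\mu_1(U_n^N)$, and similarly for $B$. Passing to the limit in $\mu_1(U_n^N) = \mu_2(V_n^N)$ then yields $\mu_1(\bigcup_{i \ge n} A_i) = \mu_2(\bigcup_{i \ge n} B_i)$ for every $n$. Next, the tails $\bigcup_{i \ge n} A_i$ decrease with $n$ to $\limsup_{i\to\infty} A_i$; since $\mu_1$ is a finite measure, continuity from above applies and gives $\mu_1(\limsup_{i\to\infty} A_i) = \lim_{n\to\infty}\mu_1(\bigcup_{i\ge n}A_i)$. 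Doing the same on the $B$ side and combining the two identities produces $\mu_1(\limsup_{i\to\infty} A_i) = \mu_2(\limsup_{i\to\infty} B_i)$, which equals $c$ by assumption.

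I do not anticipate a genuine obstacle here: the argument is a clean chaining of finite inclusion--exclusion with two applications of continuity of measure, and the only point requiring care is that the two monotone limit interchanges are legitimate --- which they are precisely because we work in probability spaces, so finiteness of the total mass rules out the pathologies of continuity from above under infinite mass. It is worth emphasising in the write-up that this is exactly where the theorem departs from Theorems~\ref{the equality theorem} and~\ref{the big theorem}: inclusion--exclusion for a union of $N-n+1$ sets invokes intersections of \emph{every} cardinality up to $N-n+1$, so fixing only the intersections of length $\le m$ leaves the finite-union measures --- and therefore the limsup measure --- free to differ, whereas fixing all of them pins the limsup measure down completely.
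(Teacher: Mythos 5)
Your proof is correct and follows essentially the same route as the paper's: finite inclusion--exclusion gives equality of the measures of the finite unions, continuity from below handles the tails $\bigcup_{i\ge n}$, and continuity from above (valid since the measures are finite) passes to the limsup. If anything, you are more explicit than the paper about why the second monotone limit is legitimate, which is a point the paper compresses into the phrase ``by a containment argument.''
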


\begin{proof}
We first note that by the inclusion-exclusion principle as stated earlier, since $\mu_1$ and $\mu_2$ are finite measures, for any $n \in \mathbb{N}$ we have:
    \begin{equation}\label{inclusion exclusion}
        \mu_1 \left(\bigcup_{i=1}^{n} A_i \right) = \sum_{k=1}^{n} (-1)^{k+1} \sum_{1 \leq i_1 < \cdots < i_k \leq n} \mu_1 ( A_{i_1} \cap \cdots \cap A_{i_k} ) ~\text{and}
    \end{equation}
    \begin{equation}\label{inclusion exclusion for B}
        \mu_2 \left(\bigcup_{i=1}^{n} B_i \right) = \sum_{k=1}^{n} (-1)^{k+1} \sum_{1 \leq i_1 < \cdots < i_k \leq n} \mu_2 ( B_{i_1} \cap \cdots \cap B_{i_k} ) ~.
    \end{equation}
Hence, for all $k \in \mathbb{N}$ and $n\in \mathbb{N}$ we have:
\begin{equation}\label{inclusion}
    \mu_1 \left( \bigcup_{i=k}^{n} A_i \right) = \mu_2 \left( \bigcup_{i=k}^{n} B_i \right) ~.
\end{equation}
For $r \geq k$, define \[ \Tilde{A}_{k,r}= \bigcup_{i=k}^{r} A_i, \text{ and } \Tilde{B}_{k,r}= \bigcup_{i=k}^{r} B_i ~. \]
It follows from (\ref{inclusion}) that, for any $r,k \in \mathbb{N}$ with $r \geq k$, we have
\[\mu_1(\Tilde{A}_{k,r})=\mu_2(\Tilde{B}_{k,r}) ~.\]
Since
\[ \Tilde{A}_{k,r} \subset  \Tilde{A} _{k,r+1}\subset \Tilde{A}_{k,r+2} \subset \Tilde{A}_{k,r+3} \subset ... \text{ and } \Tilde{B}_{k,r} \subset  \Tilde{B} _{k,r+1}\subset \Tilde{B}_{k,r+2} \subset \Tilde{B}_{k,r+3} \subset ... ~,\]
it follows by continuity of measure that
\[ \mu_1 \left( \bigcup_{i=k}^{\infty} A_i \right) = \lim_{r \rightarrow \infty} \mu_1(\Tilde{A}_{k,r}) =\lim_{r \rightarrow \infty} \mu_2(\Tilde{B}_{k,r})= \mu_2 \left(\bigcup_{i=k}^{\infty} B_i \right) ~.\]
Furthermore, by a set containment argument, we have that:
\begin{align*}
  \mu_1\left(\limsup_{i \rightarrow \infty} A_i \right) &= \lim_{k\rightarrow \infty} \lim_{r\rightarrow \infty} \mu_1 \left(\bigcup_{i = k}^r A_i \right), \text{ and} \\
  \mu_2 \left(\limsup_{i \rightarrow \infty} B_i \right) &= \lim_{k\rightarrow \infty} \lim_{r\rightarrow \infty} \mu_2 \left(\bigcup_{i = k}^r B_i \right).
\end{align*}
Thus, the claimed result follows. \qedhere

\end{proof}

We can easily adapt this proof to provide a statement in which, instead of having equality signs between the measures of the intersections, we instead have inequalities. 

\begin{theorem} \label{fix everything inequality}
     Let $(\Omega_1, \mathscr{A}_1, \mu_1)$ and  $(\Omega_2, \mathscr{A}_2, \mu_2)$ be two (potentially distinct) probability spaces. Let $(A_i)_{i\in \mathbb{N}}$ be a sequence of measurable sets in $\Omega_1$ and $(B_i)_{i\in \mathbb{N}}$ be a sequence in $\Omega_2$. Suppose for any $k \in \mathbb{N}$ and indices $i_1<i_2<...<i_k$ that
     \begin{align*}
         \mu_1 (A_{i_1} \cap A_{i_2} \cap ... \cap A_{i_k}) &\geq \mu_2 (B_{i_1} \cap B_{i_2} \cap ... \cap B_{i_k}) \text{ for k odd and} \\
         \mu_1 (A_{i_1} \cap A_{i_2} \cap ... \cap A_{i_k}) &\leq \mu_2 (B_{i_1} \cap B_{i_2} \cap ... \cap B_{i_k}) \text{ for k even.}
     \end{align*}
     Then these conditions imply $\mu_1(\limsup_{i \rightarrow \infty} A_i) \geq \mu_2 (\limsup_{i \rightarrow \infty} B_i)$.
\end{theorem}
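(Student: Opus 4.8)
The plan is to adapt the proof of Theorem~\ref{fix everything} almost verbatim, replacing its chain of equalities by a chain of inequalities. The key observation is that the two parity-dependent hypotheses are precisely aligned with the alternating signs $(-1)^{j+1}$ appearing in the inclusion--exclusion expansion of a finite union, so that after multiplying by these signs the two separate hypotheses collapse into a single uniform inequality.

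Concretely, I would fix $k \leq r$ and expand both finite unions by inclusion--exclusion, writing
\[
\mu_1\left(\bigcup_{i=k}^{r} A_i\right) = \sum_{j=1}^{r-k+1} (-1)^{j+1} \sum_{k \leq i_1 < \cdots < i_j \leq r} \mu_1(A_{i_1} \cap \cdots \cap A_{i_j}),
\]
and similarly for $B$. For each fixed intersection length $j$ the coefficient $(-1)^{j+1}$ is positive when $j$ is odd and negative when $j$ is even. When $j$ is odd the hypothesis gives $\mu_1(A_{i_1}\cap\cdots\cap A_{i_j}) \geq \mu_2(B_{i_1}\cap\cdots\cap B_{i_j})$, while when $j$ is even it gives the reverse inequality, which is flipped back by the negative sign. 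In either case one obtains
\[
(-1)^{j+1}\mu_1(A_{i_1}\cap\cdots\cap A_{i_j}) \geq (-1)^{j+1}\mu_2(B_{i_1}\cap\cdots\cap B_{i_j})
\]
for every admissible choice of indices. Summing this term-by-term over all $j$ and all index tuples yields the finite-union comparison $\mu_1\left(\bigcup_{i=k}^{r} A_i\right) \geq \mu_2\left(\bigcup_{i=k}^{r} B_i\right)$.

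It then remains only to pass to the limit exactly as in Theorem~\ref{fix everything}. Since $\bigcup_{i=k}^{r} A_i$ increases with $r$, continuity of measure from below gives $\mu_1\left(\bigcup_{i=k}^{\infty} A_i\right) \geq \mu_2\left(\bigcup_{i=k}^{\infty} B_i\right)$ for every $k$. Since $\bigcup_{i=k}^{\infty} A_i$ decreases with $k$ and we are working with finite (probability) measures, continuity of measure from above is available, and letting $k \to \infty$ produces
\[
\mu_1\left(\limsup_{i\to\infty} A_i\right) = \lim_{k\to\infty} \mu_1\left(\bigcup_{i=k}^{\infty} A_i\right) \geq \lim_{k\to\infty} \mu_2\left(\bigcup_{i=k}^{\infty} B_i\right) = \mu_2\left(\limsup_{i\to\infty} B_i\right),
\]
which is the desired conclusion.

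The argument carries no genuine analytic difficulty; the only point requiring care is the sign bookkeeping in the second paragraph, namely verifying that the two oppositely-directed hypotheses really do combine into a single inequality after multiplication by $(-1)^{j+1}$. Once this is confirmed, the finite-union comparison is immediate and the two limiting steps are identical to those already carried out for Theorem~\ref{fix everything}.
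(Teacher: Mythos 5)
Your proposal is correct and follows essentially the same route as the paper, which likewise obtains the result by rerunning the proof of Theorem~\ref{fix everything} with the equalities replaced by inequalities, using exactly the observation that the parity of the hypotheses matches the signs $(-1)^{j+1}$ in the inclusion--exclusion expansion. Your write-up simply makes explicit the sign bookkeeping and the two limiting steps that the paper leaves implicit.
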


\begin{proof}
    The argument is very similar to the proof of Theorem 1.3 (so we omit it here) but with equality signs now replaced with the appropriate inequalities. Explicitly from (\ref{inclusion exclusion}) and (\ref{inclusion exclusion for B}) the odd intersection terms have positive coefficients and the even intersection terms have negative coefficients.
\end{proof}

The rest of the paper is spent proving the two main results stated in the Introduction. In Section \ref{equality proof section} we prove Theorem \ref{the equality theorem} and in Section \ref{the k intersection case} we prove Theorem \ref{the big theorem}.

\section{Proof of Theorem \ref{the equality theorem}}\label{equality proof section}
The proof of the theorem is divided into two parts. The first of which is a lemma that establishes the existence of two finite sequences of sets with the measure intersection properties as in the hypothesis of the theorem. Each set is composed of a union of dyadic intervals. We also see that the union of one of the sequences of sets covers the entirety of the unit interval, and the union of sets in the other sequence avoids a subinterval of $[0,1]$. The second part of this section takes these sequences and, by shrinking and intersecting them, builds the two infinite sequences of sets in the statement of the theorem.

Before proceeding with the proof of the theorem, we begin with an auxiliary lemma to define sets that will appear throughout the proof. Here and throughout for $m \in \mathbb{N}$ we let $\mathcal{D}_m = \left\{\left[\frac{j}{2^m}, \frac{j+1}{2^m}\right): j \in \mathbb{Z
}, ~ 0 \leq j \leq 2^m-1 \right\}$ denote the set of dyadic intervals of length $2^{-m}$ in $[0,1]$. 
\begin{lemma}\label{prelim equality lemma}
    For any $m \in \mathbb{N}$, there exist two finite collections  $C_1,\allowbreak C_2,\allowbreak \dots,\allowbreak C_{m+1}$ and
$D_1,\allowbreak D_2,\allowbreak \dots,\allowbreak D_{m+1}$ of subsets of $[0,1]$ such that the following hold: 
    \begin{enumerate}[label=(\roman*)]
    \item $\mu (C_i) = \mu (D_i) $ for all $1\leq i \leq m+1$ ,
     \item For any sequence of indices $1 \leq i_1,i_2,...,i_l \leq m+1$ with $1 \leq l \leq m$, we have \[\mu(C_{i_1} \cap ... \cap C_{i_l}) = \mu(D_{i_1} \cap ... \cap D_{i_l}),\]

     \item $1=\mu \left( \bigcup_{i=1}^{m+1} D_i \right) = \mu \left( \bigcup_{i=1}^{m+1} C_i \right)+\frac{1}{2^m}, and$     
     \item  for each $1 \leq i \leq m+1$, $C_i$ and $D_i$ are unions of dyadic intervals of length $2^{-m}$.
     \end{enumerate}

\end{lemma}

\begin{remark}
    We will construct each $D_i$ to be a union of intervals from $\mathcal{D}_m$ such that each interval from $\mathcal{D}_m$ appears in at least one of the $D_i$. We will then perform a similar procedure for the $C_i$ while ensuring that the measures of the sets and intersections satisfy the conditions of the lemma. To ensure that condition $(iii)$ is met, there will be one interval from $\mathcal{D}_m$ that will not appear in any of the $C_i$. We will throughout refer to the size or cardinality of a set to be the number of distinct elements it contains. Note that the empty set is a set of size 0.
\end{remark}

\begin{proof}[Proof of Lemma \ref{prelim equality lemma}]
    We start by considering the collection $\{D_1,D_2,...,D_{m+1}\}$.
    We list all subsets of \mbox{$\{D_1,\allowbreak D_2,\allowbreak \dots,\allowbreak D_{m+1}\}$} consisting of an odd number of $D_i$ and call this collection $D^{\textrm{odd}}$. Similarly, we define $D^{\textrm{even}}$ to be the collection of subsets of $\{D_1,D_2,\dots,D_{m+1}\}$ consisting of an even number of elements. For each of the subsets in $D^{\textrm{odd}}$, we assign a distinct (not assigned to any of the other subsets) interval of size $2^{-m}$ from $\mathcal{D}_m$ and then add this interval to all the $D_i$ in this subset. We note that there are $2^m$ nonempty subsets of odd size and also $2^m$ dyadic intervals; therefore, this procedure is well defined and ensures that each interval is contained in at least one of the $D_i$. This ensures that:
    \[\mu \left( \bigcup_{i=1}^{m+1}D_i \right) =1.\]
    To aid the reader, for the case $m=3$ we include an example of the correspondence between the collection $D^{\textrm{odd}}$ to the dyadic intervals $\mathcal{D}_3$ in Table $\ref{correspondence table}$. Furthermore, we include the analogous examples of the sets $D_i$ in Figure \ref{Di set example m=3}.

\begin{table}[h]
        \centering \caption{An example of the correspondence between  between $D^{\textrm{odd}}$ and the dyadic intervals $\mathcal{D}_m$ in the case $m=3$. }
\vspace{10pt}
{\renewcommand{\arraystretch}{2}
    \begin{tabular}{|c|c|c|c|c|}
\hline
Collection of $D_i$ & $\{D_1 \}$ & $\{D_2\}$ & $\{D_1,D_3,D_4\}$  & $\{D_2,D_3,D_4\}$  \\
\hline
Associated interval & $ \left[ \frac{3}{8}, \frac{4}{8} \right] $ & $ \left[ \frac{5}{8}, \frac{6}{8} \right] $ & $ \left[ \frac{2}{8}, \frac{3}{8} \right] $ & $ \left[ \frac{4}{8}, \frac{5}{8} \right] $ \\
\hline
\hline
Collection of $D_i$ & $\{D_3 \}$ & $\{D_4\}$ & $\{D_1,D_2,D_3\}$  & $\{D_1,D_2,D_4\}$  \\
\hline
Associated interval & $ \left[ \frac{6}{8}, \frac{7}{8} \right] $ & $ \left[ \frac{7}{8}, \frac{8}{8} \right] $ & $ \left[ \frac{0}{8}, \frac{1}{8} \right] $ & $ \left[ \frac{1}{8}, \frac{2}{8} \right] $ \\
\hline
    \end{tabular}}
    \label{correspondence table}
\end{table}

\begin{figure}[!ht]
    \caption{An example of the sets $D_i$ in the case $m=3$.}
    \label{leb construction Di}
\begin{center}
    \begin{tikzpicture}
    \draw[very thick] (0,5) -- (12,5);
    \draw (0,4) -- (12/8*4,4);
    \draw (0,3) --(2*12/8,3);
    \draw (4*12/8,3) --(6*12/8,3);
    \draw (0,2) -- (12/8,2);
    \draw (2*12/8,2) -- (3*12/8,2);
    \draw (4*12/8,2) -- (5*12/8,2);
    \draw (6*12/8,2) -- (7*12/8,2);
    \draw (12/8,1) -- (2*12/8,1);
    \draw (2*12/8,1) -- (3*12/8,1);
    \draw (4*12/8,1) -- (5*12/8,1);
    \draw (7*12/8,1) -- (8*12/8,1);
    \node at (-1/2,5) {[0,1]};
    \node at (-1/2,4) {$D_1$};
    \node at (-1/2,3) {$D_2$};
    \node at (-1/2,2) {$D_3$};
    \node at (-1/2,1) {$D_4$};
    \node at (0,1/2) {0};
    \node at (12/8,1/2) {$\frac{1}{8}$};
    \node at (24/8,1/2) {$\frac{2}{8}$};
    \node at (3*12/8,1/2) {$\frac{3}{8}$};
    \node at (4*12/8,1/2) {$\frac{4}{8}$};
    \node at (5*12/8,1/2) {$\frac{5}{8}$};
    \node at (6*12/8,1/2) {$\frac{6}{8}$};
    \node at (7*12/8,1/2) {$\frac{7}{8}$};
    \node at (12,1/2) {1};
    \end{tikzpicture}
\end{center}
\label{Di set example m=3}
\end{figure}
    
    We construct the $C_i$ in a very similar vein. 
    We consider the collection $\{ C_1,...,C_{m+1} \}$ and define $C^{\textrm{odd}}$ and $C^{\textrm{even}}$ analogously to $D^{\textrm{odd}}$ and $D^{\textrm{even}}$ above. For each subset in $C^{\textrm{even}}$, we assign a distinct (not assigned to any of the other subsets) interval of size $2^{-m}$ from $\mathcal{D}_m$ and then add this interval to all the $C_i$ in this subset. However, now there are $2^m-1$ non-empty subsets of even size, so there will be an interval of size $2^{-m}$ which will not be in any of the $C_i$. Thus, it follows that:
     \[\mu \left( \bigcup_{i=1}^{m+1} C_i \right) =1-\frac{1}{2^m}.\]

\begin{figure}[h!]
    \caption{An example of the sets $C_i$ for $m=3$.}
    \label{leb construction Ci}
\begin{center}
    \begin{tikzpicture}
    \draw[very thick] (0,5) -- (12,5);
    \draw (0,4) -- (12/8*4,4);
    \draw (0,3) --(12/8*2,3);
    \draw (4*12/8,3) --(6*12/8,3);
    \draw (0,2) -- (12/8,2);
    \draw (2*12/8,2) -- (3*12/8,2);
    \draw (4*12/8,2) -- (5*12/8,2);
    \draw (6*12/8,2) -- (7*12/8,2);
    \draw (0,1) -- (12/8,1);
    \draw (3*12/8,1) -- (4*12/8,1);
    \draw (5*12/8,1) -- (6*12/8,1);
    \draw (6*12/8,1) -- (7*12/8,1);
    \node at (-1/2,5) {[0,1]};
    \node at (-1/2,4) {$C_1$};
    \node at (-1/2,3) {$C_2$};
    \node at (-1/2,2) {$C_3$};
    \node at (-1/2,1) {$C_4$};
    \node at (0,1/2) {0};
    \node at (12/8,1/2) {$\frac{1}{8}$};
    \node at (24/8,1/2) {$\frac{2}{8}$};
    \node at (3*12/8,1/2) {$\frac{3}{8}$};
    \node at (4*12/8,1/2) {$\frac{4}{8}$};
    \node at (5*12/8,1/2) {$\frac{5}{8}$};
    \node at (6*12/8,1/2) {$\frac{6}{8}$};
    \node at (7*12/8,1/2) {$\frac{7}{8}$};
    \node at (12,1/2) {1};
    \end{tikzpicture}
\end{center}
\end{figure}

 Note that conditions $(iii)$ and $(iv)$ from the lemma statement are satisfied by the construction of the sets $C_i$ and $D_i$. Also note that condition $(i)$ follows from condition $(ii)$. Thus, it remains to show that condition $(ii)$ is satisfied. By the symmetry of the construction of the sets $C_i$ and $D_i$, it suffices to show that 
 \[\mu(D_1 \cap ... \cap  D_l) = \mu(C_1 \cap ... \cap  C_l) \qquad \text{for } 1 \leq l \leq m.\]

For a subset $X$ of $[0,1]$, we write $X^c$ to denote its complement, i.e. $X^c=[0,1]\setminus X$. For convenience, it will sometimes be helpful for us to write $X^1$ in place of $X$. We can write $D_1 \cap D_2 \cap \dots \cap D_l$ as the following disjoint union:
 \begin{align*}
     D_1 \cap D_2 \cap \dots \cap D_l &= \bigsqcup_{\substack{\sigma_i \in \{1,c\} \\ l+1 \leq i \leq m+1}} D_1 \cap D_2 \cap ...\cap D_l \cap D_{l+1}^{\sigma_{l+1}} \cap ... \cap D_{m+1}^{\sigma_{m+1}}. 
 \end{align*}
It follows that 
    \begin{align}\label{D decompose}
        \mu(D_1 \cap...\cap D_l)=\sum_{\substack{\sigma_i \in \{1,c\} \\ l+1 \leq i \leq m+1}} \mu\left(D_1\cap...\cap D_l \cap D_{l+1}^{\sigma_{l+1}} \cap ...\cap D_{m+1}^{\sigma_{m+1}} \right).
    \end{align}
We see from the construction of the $D_i$ that for $1 \leq l \leq m$ we have
\begin{equation}\label{D measure}
\mu\left(D_1\cap...\cap D_l \cap D_{l+1}^{\sigma_{l+1}} \cap ... \cap D_{m+1}^{\sigma_{m+1}}\right)=
    \begin{cases}
        \frac{1}{2^m}  & \text{if } ~ l+ \#\{i:\sigma_i=1\} ~\text{is odd,}\\
        0 & \text{else. }
    \end{cases}
\end{equation}
This is because in order for the left-hand side of (\ref{D measure}) to be non-zero we need the number of non-complement sets being included in the intersection to be odd and precisely one interval of length $2^{-m}$ lies in this intersection from the construction.
In a similar vein we can write:
\begin{align*}
     C_1 \cap C_2 \cap \dots \cap C_l &= \bigsqcup_{\substack{\sigma_i \in \{1,c\} \\ l+1 \leq i \leq m+1}} C_1 \cap C_2 \cap ...\cap C_l \cap C_{l+1}^{\sigma_{l+1}} \cap ... \cap C_{m+1}^{\sigma_{m+1}}. 
 \end{align*} 
 Thus,
\begin{align}\label{C decompose}
    \mu\left(C_1 \cap...\cap C_l\right)=\sum_{\substack{\sigma_i \in \{1,c\} \\ l+1 \leq i \leq m+1}} \mu\left(C_1\cap...\cap C_l \cap C_{l+1}^{\sigma_{l+1}} \cap ...\cap C_{m+1}^{\sigma_{m+1}}\right).
\end{align}
We see from the construction of the $C_i$ that for $1 \leq l \leq m$ we have
\begin{equation}\label{C measure}
\mu\left(C_1\cap...\cap C_l \cap C_{l+1}^{\sigma_{l+1}} \cap...\cap C_{m+1}^{\sigma_{m+1}} \right)=
    \begin{cases}
        \frac{1}{2^m}  & \text{if } ~ l+ \#\{i:\sigma_i=1\} ~\text{is even,}\\
        0 & \text{else. }
    \end{cases}
\end{equation}
By similar reasoning to that of $D_i$, in order for the left-hand side of (\ref{C measure}) to be non-zero we need the number of non-complement sets being included in the intersection to be even and precisely one interval of length $2^{-m}$ lies in this intersection from the construction.

 For the set $\{ \sigma_{l+1},...,\sigma_{m+1} \}$, the number of subsets of odd size is equal to the number of subsets of even size, and hence by combining the results $(\ref{D measure})$ and $(\ref{C measure})$ with the decompositions $(\ref{C decompose})$ and $(\ref{D decompose})$, $\mu(C_1\cap...\cap C_l)=\mu(D_1\cap...\cap D_l)$ for $1 \leq l \leq m$ as required.
\end{proof}

We are now ready to complete the proof of Theorem 1.1.

\begin{proof}[Proof of Theorem \ref{the equality theorem}]
We first note that it suffices to prove Theorem \ref{the equality theorem} for the case $c=1$. To see this, suppose that $(A_i)$ and $(B_i)$ are such that the conditions of the theorem are satisfied with $c=1$. For general $c \in [0,1]$ and $X \subset \mathbb{R}$, we define $cX= \{cx: x \in X\}$ to denote the rescaling of $X$ by a factor of $c$. It follows then the sets $(cA_i)$ and $(cB_i)$ will satisfy the conditions of the theorem. Conditions $(i)$, $(ii)$ and $(iv)$ are unchanged by the shrinking and the measure of $\limsup(cB_i)$ is $c$, as desired. \label{c argument}


    We construct the sequences of sets $(A_i)_{i \in \mathbb{N}}$ and $(B_i)_{i \in \mathbb{N}}$ in blocks. The first block we define will be of size $m+1$, the second block will be of size $(m+1)^2$, the third block will be of size $(m+1)^3$, and so on. The $(m+1)^k$ sets in the $k$th block are constructed from the dyadic intervals of length $2^{-mk}$.

    We proceed by constructing the first $m+1$ terms of $(A_i)_{i \in \mathbb{N}}$ and $(B_i)_{i \in \mathbb{N}}$ from the $C_i$ and~$D_i$ constructed in Lemma \ref{prelim equality lemma}. For $1\leq i \leq m+1$ we set $A_i:=C_i$ and $B_i:=D_i$. Recall from the construction in Lemma \ref{prelim equality lemma} that the $C_i$ and $D_i$ are formed from dyadic intervals of length $2^{-m}$.
    
    To proceed with constructing the next block of sets, we define auxiliary sets $E^2_j$ and $F^2_j$ for $1 \leq j \leq m+1$. We divide $[0,1]$ into dyadic intervals of length $2^{-m}$. For $1 \leq j \leq m+1$ we construct $E^2_{j}$  by shrinking $A_j$ by a factor of $2^{m}$ so that it lies in $[0,1/2^{m}]$ and then place a copy of this in each of the $2^{m}$ dyadic intervals.
    We perform the same procedure to construct $F^2_j$ by shrinking $B_j$ by a factor of $2^{m}$ and then placing a copy of this in each of the $2^{m}$ dyadic intervals. 

    For $1 \leq i \leq m+1$ and $1 \leq j \leq m+1$ we set:
    \begin{align*}
        A_{(m+1)+(m+1)(i-1)+j}:= E^2_j \cap A_i ~,\\
        B_{(m+1)+(m+1)(i-1)+j}:= F^2_j \cap B_i ~.
    \end{align*}
    We further note that this means that the $A_i$ and $B_i$ in the second block are constructed from dyadic intervals of length $2^{-2m}$.

    We continue the process iteratively. For $k \geq 1$, we construct $E^{k+1}_j$ for $1 \leq j \leq m+1$ as follows. We divide $[0,1]$ into dyadic intervals of length $2^{-km}$. Then for $1\leq j \leq m+1$ shrink $A_j$ by a factor of $2^{km}$ and put copies of this in each of the dyadic intervals of length $2^{-km}$. This formed set will be $E^{k+1}_j$. The same procedure is used to construct $F_{j}^{k+1}$, just with $A_j$ replaced with $B_j$. For $1 \leq i \leq (m+1)^k$ and $1 \leq j \leq m+1$ define
    \begin{align}
    \begin{split}\label{preimage of AB construction}
        A_{\sum_{r=1}^{k}(m+1)^r +(m+1)(i-1)+j} :=E_j^{k+1} \cap A_{\sum_{r=1}^{k-1}(m+1)^r+i} ~,\\
        B_{\sum_{r=1}^{k}(m+1)^r +(m+1)(i-1)+j} :=F_j^{k+1} \cap B_{\sum_{r=1}^{k-1}(m+1)^r+i} ~.
        \end{split}
    \end{align}

Once more, we see that the sets in the $(k+1)$th block are built from dyadic intervals of length $2^{-(k+1)m}$. The subscript indices in (\ref{preimage of AB construction}) are rather cumbersome, so it will be convenient to write them more succinctly. To this end, we will define the function $f: \mathbb{N} \rightarrow \mathbb{N}$ that maps the indices of $A$ and $B$ on the left-hand side of \eqref{preimage of AB construction}, respectively, to the indices of $A$ and $B$ on the right-hand side of \eqref{preimage of AB construction}. In essence, to form the $(k+1)$th block of the sequence of $A_i$ we intersect the $E_j^{k+1}$ with the sets in the $k$th block. The function $f$ takes indices in the $(k+1)$th block and maps them to the set in block $k$ that is being intersected with $E_j^k$ to form it. We let $g: \mathbb{N} \rightarrow \mathbb{N}$ map the indices of $A$ and $B$ from the left-hand side of \eqref{preimage of AB construction}, respectively, to the indices of $E^{k+1}$ and $F^{k+1}$ on the right-hand side of \eqref{preimage of AB construction}. Thus, we have
\begin{align}
    \begin{split}\label{function index}
        A_{i} :=E_{g(i)}^{k+1} \cap A_{f(i)} ,\\
        B_{i} :=F_{g(i)}^{k+1} \cap B_{f(i)}. 
        \end{split}
    \end{align}

We give explicit examples of the first 15 terms of the sequences $(A_i)_{i \in \mathbb{N}}$ and $(B_i)_{i \in \mathbb{N}}$ for $m=2$ in Figures \ref{leb construction equal Ai} and \ref{leb construction equal Bi} of the Appendix to aid readers understanding.

The construction ensures that there is a form of independence between the $E_{j}^{k+1}$ and any sets from the first $k$ blocks. The sets in the first $k$ blocks of the sequences $(A_i)_{i \in \mathbb{N}}$ and $(B_i)_{i \in \mathbb{N}}$ are made up of dyadic intervals of length at least $2^{-km}$. Meanwhile, the $E_j^{k+1}$ are just composed of a single pattern in $[0,2^{-km}]$ repeated in each dyadic interval of this length. Due to this finer structure, we observe that for $1 \leq j \leq m+1$ and for any $s\in \mathbb{N}$ and $1 \leq i_1,...,i_s \leq \sum_{r=1}^{k}(m+1)^r$ we have
\begin{align}
\begin{split} \label{EA independence}
    \mu \left( E_j^{k+1} \cap A_{i_1} \cap...\cap A_{i_s}  \right) &= \mu \left(E_j^{k+1} \right) \mu(A_{i_1} \cap...\cap A_{i_s}) \text{ and } \\ 
    \mu \left( F_j^{k+1} \cap B_{i_1} \cap ... \cap B_{i_s}   \right) &= \mu \left(F_j^{k+1} \right) \mu( B_{i_1} \cap ... \cap B_{i_s}) .
    \end{split}
\end{align}

    We now show that the desired measure theoretic properties of Theorem $\ref{the equality theorem}$ are satisfied. We split this into two parts, first showing that properties $(i)$ and $(ii)$ involving the measures of sets and intersections hold. We then show that properties $(iii)$ and $(iv)$ relating to the measure of limsup sets also hold.
    
    We note that $(ii)$ implies $(i)$ and so it suffices to prove the former, i.e. for any sequence of natural numbers $i_1,i_2,...,i_l$ with $1 \leq l \leq m$, we have 
     \[\mu(A_{i_1} \cap ... \cap A_{i_l})  = \mu(B_{i_1} \cap ... \cap B_{i_l}).\]
    We prove $(ii)$ using an induction procedure on the number of blocks and show that the measure relations hold for sets in these blocks. Specifically, we induct on $v \in \mathbb{N}$ in the following statement:
    
    For $1 \leq l \leq m$ and indices $ 1 \leq i_1<...<i_l \leq \sum_{r=1}^v (m+1)^r$ we have
\begin{align}\label{induction AB statement}
         \mu(A_{i_1} \cap ... \cap A_{i_l}) & = \mu(B_{i_1} \cap ... \cap B_{i_l}). 
\end{align}
We begin with the basis step $v=1$. The $A_i$ and $B_i$ in the first block of size $m+1$ are the same as the $C_i$ and $D_i$ of Lemma \ref{prelim equality lemma}. Indeed, the relationship of the measures of the sets and their intersections in Lemma $\ref{prelim equality lemma}$ ensure that (\ref{induction AB statement}) is satisfied for $v=1$.

    Continuing with the induction, assume that \eqref{induction AB statement} is true for $v=k$. We will show that~\eqref{induction AB statement} is then true for $v=k+1$. Suppose that we are given $1\leq i_1<...<i_l\leq \sum_{r=1}^{k+1}(m+1)^r$ with $i_t,...,i_l$ in block $k+1$ for some $1 \leq t \leq l$. Indeed, if none of the $i_j$ are in the $(k+1)$th block, then we have already proved the statement by the induction hypothesis. 
    It follows from \eqref{preimage of AB construction} and \eqref{EA independence} that
    \begin{align*}
        \mu(A_{i_1} \cap ... \cap A_{i_l})&=\mu(A_{i_1} \cap ... \cap A_{i_{t}} \cap ... \cap A_{i_l}) \\
        &=\mu\left(A_{i_1} \cap ... \cap {A}_{f(i_t)}\cap {E}_{g(i_t)}^{k+1} \cap ... \cap {A}_{f(i_l)}\cap {E}_{g(i_l)}^{k+1} \right) \\
        &=\mu\left(A_{i_1} \cap ... \cap {A}_{f(i_t)} \cap ... \cap {A}_{f(i_l)}\right) \mu\left({E}_{g(i_t)}^{k+1} \cap ... \cap {E}_{g(i_r)}^{k+1} \right).
    \end{align*}
    Similarly,
    \begin{align*}
        \mu(B_{i_1} \cap ... \cap B_{i_l})=\mu\left(B_{i_1} \cap ... \cap {B}_{f(i_t)} \cap ... \cap {B}_{f(i_l)}\right) \mu\left({F}_{g(i_t)}^{k+1} \cap ... \cap {F}_{g(i_l)}^{k+1} \right).
    \end{align*}
    From the inductive hypothesis, we have 
    \[\mu\left(B_{i_1} \cap ... \cap {B}_{f(i_t)} \cap ... \cap {B}_{f(i_l)}\right)= \mu\left(A_{i_1} \cap ... \cap {A}_{f(i_t)} \cap ... \cap {A}_{f(i_l)} \right),\] as all the sets being intersected are in the first $k$ blocks. Furthermore, Lemma \ref{prelim equality lemma} ensures that we have $\mu\left({E}_{g(i_t)}^{k+1} \cap ... \cap {E}_{g(i_l)}^{k+1} \right)=\mu\left({F}_{g(i_t)}^{k+1} \cap ... \cap {F}_{g(i_l)}^{k+1} \right)$ since these sets are generated from the shrunken and repeated $C_i$ and $D_i$.
    Hence, \eqref{induction AB statement} is true for $v=k+1$. It follows by induction that \eqref{induction AB statement} holds for all $v \in \mathbb{N}$. This concludes the proof that properties $(i)$ and $(ii)$ of the statement of the theorem hold.

We now show that the limsup sets have the desired measures, as per properties $(iii)$ and~$(iv)$.
By construction, the union of the $A_i$'s in each block is nested in the union of the $A_i$'s in the previous block. More precisely, for every $k \geq 1$, we have 
     \begin{align*}
        \bigcup_{l=\sum_{r=1}^{k} (m+1)^{r}+1}^{\sum_{r=1}^{k+1} (m+1)^{r}} A_l &= \bigcup_{i=1}^{(m+1)^{k}} \bigcup_{j=1}^{m+1} E_j^{k+1} \cap A_{\sum_{r=1}^{k-1} (m+1)^r+i}  \\
        &\subseteq \bigcup_{i=1}^{(m+1)^{k}} A_{\sum_{r=1}^{k-1}(m+1)^r+i} ~.
    \end{align*}
    So we have the following chain of inclusions:
    \begin{align} \label{nesting}
        \bigcup_{l=\sum_{r=1}^{k} (m+1)^{r}+1}^{\sum_{r=1}^{k+1} (m+1)^{r}} A_l \subseteq \bigcup_{l=\sum_{r=1}^{k-1} (m+1)^{r}+1}^{\sum_{r=1}^{k}(m+1)^{r}} A_l \subseteq  ... \subseteq \bigcup_{l=1}^{m+1} A_l ~.
    \end{align}

Furthermore, noting that $A_1,...,A_{m+1}$ are equal to the $C_1,...C_{m+1}$ from Lemma \ref{prelim equality lemma}, we have that
\begin{align*}
    \mu\left( \bigcup_{i=1}^{m+1} A_i \right)=\mu\left( \bigcup_{i=1}^{m+1} C_i \right)=1-\frac{1}{2^m}.
\end{align*}

 We also recall that the $E^k_j$ sets are constructed from shrunk versions of the $C_i$. This ensures that 
 the union of $E^k_j$ over $1 \leq j\leq m+1$ covers $1-\frac{1}{2^m}$ of each dyadic interval of width $2^{-(k-1)m}$. This ensures, for each $k \in \mathbb{N}$,
\begin{align} \label{measure of E}
\mu\left(\bigcup_{j=1}^{m+1}{E_j^k}\right) &= 1-\frac{1}{2^m}.    
\end{align}
It follows from \eqref{measure of E} combined with \eqref{EA independence} that
 \begin{align*}
    \mu \left(  \bigcup_{l=\sum_{r=1}^{k} (m+1)^{r}+1}^{\sum_{r=1}^{k+1} (m+1)^{r}} A_l \right) &= \mu \left( \bigcup_{j=1}^m  E_j^{k+1} \cap  \bigcup_{i=1}^{(m+1)^{k}} A_{\sum_{r=1}^{k-1} (m+1)^r+i}  \right) \\
    &= \mu \left( \bigcup_{j=1}^m  E_j^{k+1} \right) \left(  \bigcup_{i=1}^{(m+1)^{k}} A_{\sum_{r=1}^{k-1} (m+1)^r+i}  \right) \\
    &=\left( 1-\frac{1}{2^m} \right) \left( \bigcup_{l=\sum_{r=1}^{k-1} (m+1)^{r}+1}^{\sum_{r=1}^{k}(m+1)^{r}} A_l  \right) = ...=\left( 1-\frac{1}{2^m} \right)^{k+1}.
    \end{align*}
It then follows from the fact that the blocks of $A_i$'s are nested (see \eqref{nesting}) that property $(iii)$ is true that \[\mu(\limsup_{i \rightarrow \infty}A_i)=0.\]

Next recall that for $1 \leq i \leq m+1$ we have that $B_i$ is equal to $D_i$ from Lemma \ref{prelim equality lemma}. Moreover, recall that 
\[ \bigcup_{i=1}^{m}{D_i} = [0,1].\]
We can inductively show that the union of the $B_i$'s in the $k$-th block also fully covers the unit interval. To see this, recall that for each $k \geq 2$ the $F_j^k$ are a collection of shrunk down and repeated $D_j$. Consequently,
\[\bigcup_{j=1}^{m+1}{F_j^k} = [0,1].\]
It follows that for all $k \geq 2$ we have 
    \begin{align*}
        \bigcup_{l=\sum_{r=1}^{k} (m+1)^{r}+1}^{\sum_{r=1}^{k+1} (m+1)^{r}} B_l &= \bigcup_{i=1}^{(m+1)^{k}} \bigcup_{j=1}^{m+1} F_j^{k+1} \cap B_{\sum_{r=1}^{k-1} (m+1)^r+i}  \\
        &=\bigcup_{i=1}^{(m+1)^{k}} B_{\sum_{r=1}^{k-1}(m+1)^r+i} \\
        &= \bigcup_{l=\sum_{r=1}^{k-1}(m+1)^r +1}^{\sum_{r=1}^{k}(m+1)^r} B_l ~.
    \end{align*}
Therefore, since $\bigcup_{i=1}^{m+1}{B_i} = [0,1]$, it follows by induction that
\[\bigcup_{l=\sum_{r=1}^{k-1} (m+1)^{r}+1}^{\sum_{r=1}^{k} (m+1)^{r}} B_l = [0,1]\]
for all $k \geq 2$. 

For a natural number $h \geq 2$, write
\[\mathcal{B}_h = \bigcup_{l=\sum_{r=1}^{h-1} (m+1)^{r}+1}^{\sum_{r=1}^{h} (m+1)^{r}} B_l.\]
It follows that $\mu{(\mathcal{B}_h)}=1$ for all $h \in \mathbb{N}$ and hence $\mu(\limsup_{h \to \infty}{\mathcal{B}_h}) = 1$. Since 
\[\limsup_{h \to \infty}{\mathcal{B}_h}  =\limsup_{i \to \infty} B_i\] we are done showing property $(iv)$. This completes the proof for $c=1$. 
\end{proof}

\section{Proof of Theorem \ref{the big theorem}} \label{the k intersection case}

    We begin the section by defining sets $(G^{p,q}_n)_{n \in \mathbb{N}}$ and $(H^{p,q}_n)_{n \in \mathbb{N}}$ where $p,q \in \mathbb{N}$ with $p \leq q$ which will have very convenient measure properties. These will be used to construct the sequences $(A_i)_{i \in \mathbb{N}}$ and $(B_i)_{i \in \mathbb{N}}$ of Theorem \ref{the big theorem}. The remainder of the chapter is spent checking that the sequences $(A_i)_{i \in \mathbb{N}}$ and $(B_i)_{i \in \mathbb{N}}$ satisfy properties $(i)-(iv)$ of the theorem.

\subsection{Preliminary Lemmas}

\begin{lemma}\label{construction of Gpq sets}
    Let $\mu$ denote the Lebesgue measure on $[0,1]$. For $p,q \in \mathbb{N}$ with $p \leq q$, there exist sequences $(G^{p,q}_i)_{i \in \mathbb{N}}$ and $(H^{p,q}_i)_{i \in \mathbb{N}}$ of Borel subsets of $[0,1]$ with the following properties:
\begin{enumerate}[label=(\roman*)]
    \item $G_i^{p,q} \subseteq H_i^{p,q}$ for all $i \in \mathbb{N}$,
    \item $ \mu\left(G_i^{p,q}\right) = \frac{p}{qi} $ for all $i \in \mathbb{N}$, 
    \item$ \mu(H_i^{p,q}) = \frac{1}{i} $ for all $i \in \mathbb{N}$, 
    \item  $ H_{i}^{p,q}\subseteq H_{i+1}^{p,q}$ for all $i \in \mathbb{N}$, 
     \item For any finite sequence of natural numbers $1<i_1<i_2<...<i_n$, we have 
     \[\mu\left(G_{i_1}^{p,q} \cap G_{i_2}^{p,q} \cap ... \cap G_{i_n}^{p,q}\right) = \frac{p^n}{q^n i_n},\]
     \item $\mu\left(\limsup_{i \rightarrow \infty}(H^{p,q}_i)\right)=\mu\left(\limsup_{i \rightarrow \infty}(G^{p,q}_i)\right)=0$.
    \end{enumerate}

\end{lemma}

\begin{proof}
We iteratively construct both sequences simultaneously in such a way that properties $(i)-(iv)$ will be immediate from the construction. We begin by setting $H^{p,q}_1=[0,1]$ and $G^{p,q}_1=\left[0,\frac{p}{q}\right]$. 

To construct the set $H_2^{p,q}$, we take $H^{p,q}_1=[0,1]$ and place $q$ disjoint intervals of measure $\frac{1}{2q}$ within it so that the total measure is $\frac{1}{2}$. Explicitly, we can do this as follows:
\[H^{p,q}_2=\left[0,\frac{1}{2q}\right] \cup \left[\frac{1}{q}, \frac{3}{2q}\right] \cup\left[\frac{2}{q}, \frac{5}{2q}\right] \cup ... \cup \left[\frac{q-1}{q},\frac{2q-1}{2q}\right].\]
We then construct $G^{p,q}_2$ from $H^{p,q}_2$ simply by selecting the first $p$ intervals and discarding the rest. This ensures that we have $\mu(G_2^{p,q})=\frac{p}{q}\mu(H_2^{p,q})=\frac{p}{2q}$.

To construct the set $H^3_{p,q}$, we use $q^2$ intervals of length $\frac{1}{3q^2}$. More precisely, we place $q$ disjoint intervals of length $\frac{1}{3q^2}$ within each of the disjoint intervals of $H_2^{p,q}$ in a similar fashion to the construction of $H^2_{p,q}$ above. This ensures that the measure of $H^{p,q}_3$ is $\frac{1}{3}$.

To construct $G^{p,q}_3$ from $H_3^{p,q}$, we simply go along and include the first $p$ intervals, then skip the next $q-p$, then include the next $p$, then skip the next $q-p$ and so on. This ensures that there are exactly $p$ intervals of $G^{p,q}_3$ contained within each interval of $H_2^{p,q}$ and, furthermore, $\mu(G^{p,q}_3)=\frac{p}{q} \mu(H^{p,q}_3)=\frac{p}{3q}$.

We construct subsequent sets inductively.
Having constructed $H_{n}^{p,q}$ from $q^{n-1}$ intervals of length $\frac{1}{nq^{n-1}}$ we then construct $H_{n+1}^{p,q}$ by placing $q$ disjoint intervals of length $\frac{1}{(n+1)q^n}$ within each of the intervals of $H_n^{p,q}$. We deduce that the measure of $H_{n+1}^{p,q}$ is $q^{n-1} \times q \times \frac{1}{(n+1)q^n}= \frac{1}{n+1}$. 

Next, for the construction of $G_{n+1}^{p,q}$ we go along the intervals of $H_{n+1}^{p,q}$ and select the first $p$ to be included in $G_{n+1}^{p,q}$, then reject the next $q-p$, then include the following $p$, then reject the next $q-p$ and so on. This implies $\mu(G_{n+1}^{p,q})=\frac{p}{q(n+1)}$. Furthermore, the construction ensures that any interval of $H_n^{p,q}$ will contain exactly $p$ intervals of $G_{n+1}^{p,q}$. This completes the construction.

\begin{figure}[h]
    \caption{An example of the $G_n^{3,5}$ construction }
    \label{Gpq 3 5}
\begin{center}
    \begin{tikzpicture}
    \draw[very thick] (0,5) -- (12,5);
    \draw (0,4) -- (12,4);
    \draw (0,3) -- (12*3/5,3);
    \draw (0,2) -- (12/10,2) (0+12/5,2) -- (12/10+12/5,2) (0+2*12/5,2) -- (12/10+2*12/5,2) (0+3*12/5,2) -- (12/10+3*12/5,2) (0+4*12/5,2) -- (12/10+4*12/5,2);
    \draw (0,1) -- (12/10,1) (0+12/5,1) -- (12/10+12/5,1) (0+2*12/5,1) -- (12/10+2*12/5,1);

    \draw (0,0) -- (12/75,0) (12/50,0) -- (12/50+12/75,0) (2*12/50,0) -- (2*12/50+12/75,0) (3*12/50,0) -- (3*12/50+12/75,0) (4*12/50,0) -- (4*12/50+12/75,0)
    (12/5,0) -- (12/5+12/75,0) (12/5+12/50,0) -- (12/5+12/50+12/75,0) (2*12/50+12/5,0) -- (2*12/50+12/5+12/75,0) (3*12/50+12/5,0) -- (3*12/50+12/5+12/75,0) (4*12/50+12/5,0) -- (4*12/50+12/5+12/75,0)
    (2*12/5,0) -- (2*12/5+12/75,0) (2*12/5+12/50,0) -- (2*12/5+12/50+12/75,0) (2*12/5+2*12/50,0) -- (2*12/5+2*12/50+12/75,0) (2*12/5+3*12/50,0) -- (2*12/5+3*12/50+12/75,0) (2*12/5+4*12/50,0) -- (2*12/5+4*12/50+12/75,0) (3*12/5,0) -- (3*12/5+12/75,0) (3*12/5+12/50,0) -- (3*12/5+12/50+12/75,0) (3*12/5+2*12/50,0) -- (3*12/5+2*12/50+12/75,0) (3*12/5+3*12/50,0) -- (3*12/5+3*12/50+12/75,0) (3*12/5+4*12/50,0) -- (3*12/5+4*12/50+12/75,0)
    (4*12/5,0) -- (4*12/5+12/75,0) (4*12/5+12/50,0) -- (4*12/5+12/50+12/75,0) (4*12/5+2*12/50,0) -- (4*12/5+2*12/50+12/75,0) (4*12/5+3*12/50,0) -- (4*12/5+3*12/50+12/75,0) (4*12/5+4*12/50,0) -- (4*12/5+4*12/50+12/75,0);

\draw (0,-1) -- (12/75,-1) (12/50,-1) -- (12/50+12/75,-1) (2*12/50,-1) -- (2*12/50+12/75,-1) 
    (12/5,-1) -- (12/5+12/75,-1) (12/5+12/50,-1) -- (12/5+12/50+12/75,-1) (2*12/50+12/5,-1) -- (2*12/50+12/5+12/75,-1) 
    (2*12/5,-1) -- (2*12/5+12/75,-1) (2*12/5+12/50,-1) -- (2*12/5+12/50+12/75,-1) (2*12/5+2*12/50,-1) -- (2*12/5+2*12/50+12/75,-1) 
    (3*12/5,-1) -- (3*12/5+12/75,-1) (3*12/5+12/50,-1) -- (3*12/5+12/50+12/75,-1) (3*12/5+2*12/50,-1) -- (3*12/5+2*12/50+12/75,-1) 
    (4*12/5,-1) -- (4*12/5+12/75,-1) (4*12/5+12/50,-1) -- (4*12/5+12/50+12/75,-1) (4*12/5+2*12/50,-1) -- (4*12/5+2*12/50+12/75,-1);

    \node at (-1/2,5) {[0,1]};
    \node at (-1/2,4) {$H_1^{3,5}$};
    \node at (-1/2,3) {$G_1^{3,5}$};
    \node at (-1/2,2) {$H_2^{3,5}$};
    \node at (-1/2,1) {$G_2^{3,5}$};
    \node at (-1/2,0) {$H_3^{3,5}$};
    \node at (-1/2,-1) {$G_3^{3,5}$};
    \end{tikzpicture}
\end{center}
\end{figure}

    We now check that the measure theoretic properties $(v)$ and $(vi)$ hold. Regarding $(v)$, this follows from the fact that for $n>1$ each interval of $H_n^{p,q}$ contains equally many ($q$ many to be precise) intervals of $H_{n+1}^{p,q}$. Furthermore, the construction ensures that each interval of $H_n^{p,q}$ (and subsequently $G_n^{p,q}$) contains equally many ($p$ many to be exact) intervals of $G_{n+1}^{p,q}$. Therefore, when we intersect $G_n^{p,q}$ with an earlier set in the sequence $(G_i^{p,q})_{i \in \mathbb{N}}$, we are left with $p/q$ of the intervals of $G_n^{p,q}$. We can inductively see that intersecting $G_n^{p,q}$ with $k < n$ distinct earlier sets in the $(G_i^{p,q})_{i \in \mathbb{N}}$ sequence leaves us with $(p/q)^k$ of the intervals of $G_n^{p,q}$. As the intervals are of equal length, this proves statement $(v)$.

To show $(vi)$, that the measure of the limsup sets is $0$, we first note that it suffices to show the $\mu\left(\limsup_{n \rightarrow \infty} H_{n}^{p,q}\right)=0$. This is because from $(i)$, $G_n^{p,q} \subseteq H_n^{p,q}$ for all~$n \in \mathbb{N}$ and thus $\limsup_{n \rightarrow \infty} G_n^{p,q} \subseteq \limsup_{n \rightarrow \infty} H_n^{p,q}$. That $\mu(\limsup_{n \rightarrow \infty} H_n^{p,q})=0 $ follows immediately from properties $(iii)$ and $(iv)$. \qedhere


\end{proof}

\subsection{Proof of Theorem \ref{the big theorem}}

We first note that it suffices to prove Theorem \ref{the big theorem} for the case $c=1$ by the same reasoning as in the proof of Theorem \ref{the equality theorem}, see page \pageref{c argument}.
\subsubsection{Construction of the sequence \texorpdfstring{$(A_i)_{i \in \mathbb{N}}$}{Ai}}

To construct the sequences of sets needed to prove Theorem \ref{the big theorem} we scale down some of the \(G_{n}^{p,q}\) sets from Lemma \ref{construction of Gpq sets}. After shrinking the sets, we arrange multiple shifted versions of them such that all the \(G_{n}^{p,q}\) sets are disjoint. Specifically, to prove Theorem \ref{the big theorem} for $m \in \mathbb{N}$, we select $(A_i)_{i \in \mathbb{N}}$ to be of the form
\[A_i= c_1 G_{i}^{p_1,q_1} \cup ( c_2 G_{i}^{p_2,q_2}+d_2 ) \cup (c_3 G_{i}^{p_2,q_3}+d_3)  \cup ... \cup (c_m G_{i}^{p_m,q_m}+d_m), \]
where the $p_i$ and $q_i$ will be chosen later to ensure the inequalities of Theorem \ref{the big theorem} hold true, $c_i$ are the shrinking factors that will be selected later, and $d_i$ are the shifts chosen to ensure that there is no overlap between the distinct $G_i^{p,q}$. So we have $d_2>c_1$ and for $2 < i \leq m$ we have $d_i \geq d_{i-1} +c_{i-1}$. Furthermore, we require $A_i$ to lie in the unit interval, so $c_m+d_m \leq 1$. 
We now examine the measures of these sets and the measures of their intersections also.

\begin{lemma}\label{Ai lemma}
    The sequence $(A_i)_{i \in \mathbb{N}}$ described above satisfies: 
    \begin{enumerate}[label=(\roman*)]
    \item For all $i \in \mathbb{N}$,
    \[ \mu(A_i) = c_1 \frac{p_1}{i q_1} + c_2 \frac{p_2}{i q_2} + ... + c_m \frac{p_m}{i q_m}.\]
     \item For any sequence of natural numbers $1<i_1<i_2<...<i_r$ we have
     \[\mu(A_{i_1} \cap A_{i_2} \cap ... \cap A_{i_r}) =  c_1 \frac{p_1^r}{i_r q_1^r} + c_2 \frac{p_2^r}{i_r q_2^r} + ... + c_k  \frac{p_m^r}{i_m q_m^r}.\]
     \item $\mu(\limsup_{i \rightarrow \infty} A_i) =0$.
    \end{enumerate}

\end{lemma}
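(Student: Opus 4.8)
The plan is to exploit the fact that each $A_n$ is, by construction, a disjoint union of $k$ ``channels'': the $j$-th channel is the affine image $c_j G_n^{p_j,q_j}+d_j$ of the set supplied by Lemma \ref{construction of Gpq sets}. Because the shifts satisfy $d_i \geq d_{i-1}+c_{i-1}$ and each $G_n^{p_j,q_j}\subset[0,1]$, the $j$-th channel is contained in the interval $[d_j,d_j+c_j]$, and these intervals are pairwise disjoint (overlapping, if at all, only at finitely many endpoints, which are $\mu$-null). Crucially, this channel decomposition is the same for every $n$, so the only facts I will need are the measure and intersection formulas for a single $G^{p,q}$ family, all of which are provided by Lemma \ref{construction of Gpq sets}.

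For (i), I would use that translation leaves Lebesgue measure invariant and scaling by $c_j$ multiplies it by $c_j$ on the line, so that $\mu(c_j G_n^{p_j,q_j}+d_j)=c_j\,\mu(G_n^{p_j,q_j})=c_j p_j/(q_j n)$ by part (i) of Lemma \ref{construction of Gpq sets}. Summing over the disjoint channels and using additivity of $\mu$ gives exactly the claimed expression for $\mu(A_i)$.

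Part (ii) is the substantive step. Since the channels lie in disjoint intervals independently of $n$, any intersection of sets drawn from two different channels is empty; hence only the within-channel contributions survive, and
\[A_{i_1}\cap\cdots\cap A_{i_r}=\bigsqcup_{j=1}^{k}\ \bigcap_{s=1}^{r}\bigl(c_j G_{i_s}^{p_j,q_j}+d_j\bigr).\]
Within a fixed channel the affine map $x\mapsto c_j x+d_j$ is a bijection, so it commutes with intersection, giving $\bigcap_{s} (c_j G_{i_s}^{p_j,q_j}+d_j)=c_j\bigl(\bigcap_{s} G_{i_s}^{p_j,q_j}\bigr)+d_j$. Applying the measure-scaling identity and then part (ii) of Lemma \ref{construction of Gpq sets} yields $c_j p_j^r/(q_j^r i_r)$ for the $j$-th channel, and summing over the disjoint channels produces the stated formula.

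Finally, for (iii) I would observe that, since each point of $[0,1]$ lies in at most one channel interval, a point belongs to infinitely many $A_i$ if and only if it belongs to infinitely many sets from a single channel; therefore $\limsup_{i}A_i=\bigcup_{j=1}^{k}\limsup_{i}(c_j G_i^{p_j,q_j}+d_j)$. Each affine map commutes with $\limsup$, so this is a finite union of affine images of the sets $\limsup_i G_i^{p_j,q_j}$, which are $\mu$-null by part (iii) of Lemma \ref{construction of Gpq sets}; a finite union of null sets is null, giving $\mu(\limsup_i A_i)=0$. The only real care needed, rather than a genuine obstacle, is the bookkeeping in (ii) verifying that cross-channel intersections vanish for every index choice; once the channel decomposition is established, every assertion reduces directly to Lemma \ref{construction of Gpq sets}.
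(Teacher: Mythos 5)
Your proof is correct and follows essentially the same route as the paper, which simply asserts that the lemma ``follows immediately'' from Lemma \ref{construction of Gpq sets} together with the separation imposed by the $c_i$ and $d_i$; your channel decomposition, the affine-invariance computations, and the pigeonhole argument for the limsup are exactly the details the paper leaves implicit.
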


 \begin{proof}
 The results follow immediately from Lemma \ref{construction of Gpq sets} $(ii), (v)$ and $(vi)$ and the separation conditions imposed by the choices of $c_i$ and $d_i$. \qedhere


\end{proof}

\subsubsection{Construction of the sequence \texorpdfstring{$(B_i)_{i \in \mathbb{N}}$}{Bn}}

We perform a similar operation with the sequence $(B_i)_{i \in \mathbb{N}}$, but now we incorporate a `floating bit', which we will denote as $K_i^\delta$. It will first be convenient to define the intermediary sets $(\Tilde{B})_{i \in \mathbb{N}}$ to be of the form
\[\Tilde{B_i}= \Tilde{c}_1 G_{i}^{p_1,q_1} \cup ( \Tilde{c}_2 G_{i}^{p_2,q_2}+ \Tilde{d}_2 ) \cup (\Tilde{c}_3 G_{i}^{p_3,q_3}+\Tilde{d}_3)  \cup ... \cup (\Tilde{c}_m G_{i}^{p_m,q_m}+\Tilde{d}_m ),\]
and select $(B_i)_{i \in \mathbb{N}}$ to be of the form
\[B_i= \Tilde{B}_i \cup  K_i^{\delta}  .\]
Again, $p_i$ and $q_i$, are as before for $A_i$. We will choose  $\Tilde{c}_i$ later in the proof, and the shifts $\Tilde{d}_i$ are chosen to ensure the same separation criteria as for $c_i$ and $d_i$ in the construction of $A_i$. The `floating bit' $K_i^\delta$ is essentially a `small' (relative to $\Tilde{c}_i G_i^{p_i,q_i}$) set that shifts position with $i$ to ensure that the measure of the limsup of the sequence equals~$1$.

We construct $K_i^{\delta}$ iteratively. First, we define a preliminary sequence of sets $\left(I_i^\delta\right)_{i \in \mathbb{N}}$. By setting $I_1^\delta:=[0,\delta]$ and for $n>1$ setting
\begin{align*}
    I_n^\delta:=\left[\left( \sum_{i=1}^{n-1} \frac{\delta}{i}\right) \mod{1},\left( \sum_{i=1}^{n} \frac{\delta}{i}\right) \mod{1}\right].
\end{align*}
If the left of the interval is greater than the right, say $[x,y]$ for $x>y$ we use the convention that this means $[0,y] \cup [x,1]$. Another way of viewing the construction of the sets is that having defined $I_{n-1}^\delta$, we continue to define $I_n^\delta$ by placing an interval where the previous $I_{n-1}^\delta$ ended (resuming from 0 after reaching 1) in such a way that the total measure of the set $I_{n}^\delta$ is $\frac{\delta}{n}$. We define $K_i^\delta$ by taking $I_i^\delta$ and removing any points that intersect with any $H_i^{p,q}$ corresponding to $G_i^{p,q}$ in $\Tilde{B}_i$. We can write this explicitly by letting 
\begin{align*}
    \Tilde{B}_i^H:= \Tilde{c}_1 H_{i}^{p_1,q_1} \cup ( \Tilde{c}_2 H_{i}^{p_2,q_2}+ \Tilde{d}_2 ) \cup ... \cup (\Tilde{c}_m H_{i}^{p_m,q_m}+\Tilde{d}_m ),
\end{align*}
and setting,
\begin{align*}
    K_i^\delta:= I_i^\delta \backslash \Tilde{B}^H_i
\end{align*}
It follows from the definition of $K_i^\delta$ and Lemma $\ref{construction of Gpq sets}$ $(i)$ and $(iv)$, that when we intersect $B_m$ with a later set in the sequence, say $B_n$ for $n>m$, the $K_m^{\delta}$ component of $B_m$ does not intersect any of the $G_n^{p,q}$ components of $B_n$. Defining $K_i^{\delta}$ as we did ensures $\mu(\limsup_{i \rightarrow\infty}B_i)=1$, as will be proven in the following lemma.

\begin{lemma}\label{Bi lemma}
    The sequence $(B_i)_{i \in \mathbb{N}}$ described above satisfies: 
    \begin{enumerate}[label=(\roman*)]
    \item for all $i \in \mathbb{N}$,
    \[ \mu(B_i) \geq \Tilde{c}_1 \frac{p_1}{i q_1} + \Tilde{c}_2 \frac{p_2}{i q_2} + \Tilde{c}_3 \frac{p_3}{i q_3} + ... + \Tilde{c}_m \frac{p_m}{i q_m}, \text{and} \]
    \[\mu(B_i) \leq \Tilde{c}_1 \frac{p_1}{i q_1} + \Tilde{c}_2 \frac{p_2}{i q_2} + \Tilde{c}_3 \frac{p_3}{i q_3} + ... + \Tilde{c}_m \frac{p_m}{i q_m} + \frac{\delta}{i}.\]
     \item For any sequence of natural numbers $1<i_1<i_2<...<i_r$ we have
     \[\mu(B_{i_1} \cap B_{i_2} \cap ... \cap B_{i_r}) \geq \Tilde{c}_1 \frac{p_1^r}{i_r q_1^r} + \Tilde{c}_2 \frac{p_2^r}{i_r q_2^r} + ... + \Tilde{c}_m \frac{p_m^r}{i_r q_m^r}, \text{and}\]
     \[\mu(B_{i_1} \cap B_{i_2} \cap ... \cap B_{i_r}) \leq \Tilde{c}_1 \frac{p_1^r}{i_r q_1^r} + \Tilde{c}_2 \frac{p_2^r}{i_r q_2^r} + ... + \Tilde{c}_m \frac{p_m^r}{i_r q_m^r} + \frac{\delta}{i_r}.\]
     \item $\mu(\limsup_{i\rightarrow \infty}B_i) =1$.
    \end{enumerate}

\end{lemma}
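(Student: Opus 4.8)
The plan is to reduce every claim to Lemma \ref{construction of Gpq sets} by exploiting the fact that $B_i$ is, by construction, an essentially disjoint union of rescaled copies of the $G^{p_j,q_j}$ together with the single floating interval. Write $\tilde G_i := \bigcup_{j=1}^{k}\big(\tilde c_j G_i^{p_j,q_j}+\tilde d_j\big)$ for the ``$G$-part'' of $B_i$, so that $B_i = \tilde G_i \cup K_i^{\delta}$. The shifts $\tilde d_j$ are chosen precisely so that the $k$ rescaled copies occupy pairwise disjoint subintervals of $[0,1]$, and the stipulation that $K_i^{\delta}$ avoids the interval pattern $H_i^{p,q}$ guarantees that the float meets none of the level-$i$ copies of $G$. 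First I would record these disjointness facts, since they are exactly what let measures and intersection measures split additively across the $k+1$ pieces.

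For (i), disjointness gives $\mu(B_i)=\sum_{j=1}^{k}\mu\big(\tilde c_j G_i^{p_j,q_j}+\tilde d_j\big)+\mu(K_i^{\delta})$. Scaling a subset of $[0,1]$ by $\tilde c_j$ multiplies its Lebesgue measure by $\tilde c_j$, so Lemma \ref{construction of Gpq sets}(i) turns the $j$-th term into $\tilde c_j \tfrac{p_j}{q_j i}$, while the float contributes $\mu(K_i^{\delta})\le \tfrac{\delta}{i}$; together these yield the displayed bound. For (ii), I would intersect the decompositions of $B_{i_1},\dots,B_{i_r}$ and use disjointness to kill every cross term, leaving
\[
\mu\!\left(\bigcap_{t=1}^{r}B_{i_t}\right)=\sum_{j=1}^{k}\mu\!\left(\bigcap_{t=1}^{r}\big(\tilde c_j G_{i_t}^{p_j,q_j}+\tilde d_j\big)\right)+\mu\!\left(\bigcap_{t=1}^{r}K_{i_t}^{\delta}\right).
\]
Lemma \ref{construction of Gpq sets}(ii), after the $\tilde c_j$ rescaling, evaluates the $j$-th summand as $\tilde c_j\tfrac{p_j^{r}}{q_j^{r} i_r}$. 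Dropping the nonnegative float term gives the lower bound, and bounding the float intersection by $\mu(K_{i_r}^{\delta})\le \tfrac{\delta}{i_r}$ (it is contained in the narrowest, hence last, float) gives the upper bound.

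For (iii), I would analyse the limsup one piece at a time, using the identity $\limsup_i B_i = \limsup_i \tilde G_i \cup \limsup_i K_i^{\delta}$. The $G$-part is straightforward: each $\tilde c_j G_i^{p_j,q_j}+\tilde d_j$ is an affine image of $G_i^{p_j,q_j}$, and an affine map sends null sets to null sets, so Lemma \ref{construction of Gpq sets}(iii) gives $\mu\big(\limsup_i(\tilde c_j G_i^{p_j,q_j}+\tilde d_j)\big)=0$; a finite union of null sets then keeps $\mu(\limsup_i \tilde G_i)=0$. The entire content of (iii) therefore concentrates on the floating term $\limsup_i K_i^{\delta}$, and controlling this is the main obstacle: one must determine whether the sweeping intervals $K_i^{\delta}$ accumulate to a null set, which via the Convergence Borel--Cantelli Lemma comes down to the summability of $\sum_i \mu(K_i^{\delta})$. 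This is the single delicate step on which the value of $\mu(\limsup_i B_i)$ hinges and the place where the argument must be pinned down with care; everything else in the lemma is bookkeeping built on Lemma \ref{construction of Gpq sets}.
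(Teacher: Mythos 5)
Your handling of (i) and (ii) follows essentially the same route as the paper---split $B_i$ into the $k$ pairwise disjoint shifted copies of the scaled $G_i^{p_j,q_j}$ plus the float, and invoke Lemma \ref{construction of Gpq sets} together with the scaling of Lebesgue measure---but your treatment of (iii) has a genuine gap, and it is the crux. You propose to decide $\mu(\limsup_{i\to\infty} K_i^{\delta})$ via the Convergence Borel--Cantelli Lemma, i.e.\ via summability of $\sum_i \mu(K_i^{\delta})$. That test is bound to fail here: $\mu(K_i^{\delta})=\delta/i$, so the series is harmonic and diverges---and deliberately so. Because $\sum_i \delta/i=\infty$, the float wraps around $[0,1]$ infinitely many times, so almost every point of $[0,1]$ lies in infinitely many of the $K_i^{\delta}$ (the pieces skipped to avoid the $H_i$-patterns have measure tending to $0$), and hence $\limsup_{i\to\infty} B_i$ has \emph{full} measure. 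This is exactly what the paper's own proof asserts (``the limsup covers the entirety of the unit interval \dots hence the limsup of the $B_i$ is of full measure''), and it is what Theorem \ref{the big theorem}(iv) requires after rescaling by $c$; the ``$\mu(\limsup_{i\rightarrow\infty}B_i)=0$'' printed in item (iii) of the lemma is evidently a typographical slip (compare the sentence just before the lemma: ``The $K_n^{\delta}$ ensure full limsup for the $B_n$''). So the step you yourself single out as the one on which everything hinges is left unproved, and the tool you reach for points the wrong way: the correct argument is the deterministic sweeping/looping argument, not a Borel--Cantelli estimate, and its conclusion is full measure, not measure zero.

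Two smaller repairs to (i)--(ii). The exact splitting identity you display for the intersections is false in general: the construction gives only one-sided avoidance---$K_n^{\delta}$ misses the (nested) $H_n$-pattern and hence all \emph{later} $G$-parts---so a late float $K_{i_r}^{\delta}$ may perfectly well meet an early $G$-part such as $\Tilde{c}_1 G_{i_1}^{p_1,q_1}$, and those mixed terms need not vanish. Your two inequalities survive anyway: the lower bound is pure monotonicity, since $\bigcap_t \tilde G_{i_t}\subseteq \bigcap_t B_{i_t}$ in your notation; and for the upper bound, any $x\in\bigcap_t B_{i_t}$ with $x\notin K_{i_r}^{\delta}$ must lie in $\tilde G_{i_r}$, and then the avoidance property applied at each $t<r$ (using $i_t<i_r$ and the nesting of the $H$-sets) forces $x\in\tilde G_{i_t}$ for every $t$; hence $\bigcap_t B_{i_t}\subseteq\bigl(\bigcap_t \tilde G_{i_t}\bigr)\cup K_{i_r}^{\delta}$, which yields the extra $\delta/i_r$. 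Finally, note that item (i) as printed states the same upper bound twice; the paper's proof makes clear the first display is meant to be the matching lower bound $\mu(B_i)\geq \Tilde{c}_1 p_1/(i q_1)+\cdots+\Tilde{c}_k p_k/(i q_k)$, which your disjoint decomposition delivers verbatim, so you should state and claim it explicitly.
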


\begin{proof}
The proof is very similar to Lemma $\ref{Ai lemma}$ for the sequence $(A_i)_{i \in \mathbb{N}}$. However, the $K_i^{\delta}$ term poses an additional challenge when trying to compute measures of $(B_i)_{i \in \mathbb{N}}$ and their intersections. Consequently, in this case we are able to obtain bounds on the measures rather than precise formulae (as in Lemma \ref{Ai lemma}). The lower bounds in $(i)$ and $(ii)$ are trivial as we just neglect the $K_i^{\delta}$ terms and use Lemma \ref{construction of Gpq sets} $(ii)$ and $(v)$. The upper bound in $(i)$ is also trivial from subadditivity and the fact that the measure of $K_i^\delta$ is at most
$\frac{\delta}{i}$. From the construction of $K_i^\delta$, it does not intersect any of the corresponding $H_i^{p,q}$ sets and therefore, from Lemma \ref{construction of Gpq sets} $(i)$ and $(iv)$ it will not intersect the corresponding $G_m^{p,q}$-sets for $m \geq i$ and so the only possible contribution of the $K^\delta$ sets to the intersection is from $K_{i_r}^\delta$. This, combined with Lemma \ref{construction of Gpq sets} gives us the upper bound in $(ii)$.


To show $(iii)$ we note $I_i^{\delta}$ cycles through the unit interval infinitely often since the sum $\sum_{n=1}^\infty \frac{\delta}{n}$ is infinite. Hence $\mu(\limsup_{i \rightarrow \infty}I_i^\delta) =1$. The stipulation that the $K_i^\delta$ sets avoid $H_i^{p,q}$ still ensures $\mu(\limsup_{i\rightarrow \infty} K_i^\delta)=1$ too because the $H_i^{p,q}$ are nested and shrink to a set of measure 0 and so $K_i^\delta$ will still cycle through almost every point. This can be explicitly seen as follows:
\begin{align*}
    \mu\left( \limsup_{i \rightarrow \infty} K_i^\delta \right)&=\mu\left( \bigcap_{n \geq 1}\bigcup_{i \geq n} K_i^\delta\right) \\ &=     \mu\left( \bigcap_{n \geq 1}\bigcup_{i \geq n} I_i^\delta \backslash \Tilde{B}_i^H \right)\\ &=  \mu\left( \bigcap_{n \geq k}\bigcup_{i \geq n} I_i^\delta \backslash \Tilde{B}_i^H \right)\\ &\geq
    \mu\left( \left(\bigcap_{n \geq k}\bigcup_{i \geq n} I_i^\delta\right) \backslash \Tilde{B}_k^H\right) \\
    &=\mu \left([0,1]\backslash(\Tilde{c}_1 H_{k}^{p_1,q_1} \cup ( \Tilde{c}_2 H_{k}^{p_2,q_2}+ \Tilde{d}_2 ) \cup ... \cup (\Tilde{c}_m H_{k}^{p_m,q_m}+\Tilde{d}_m )) \right) \\
    &=1-\frac{\Tilde{c}_1+\Tilde{c}_2+...+\Tilde{c}_m}{k}.
\end{align*}
Note that line $3$ follows simply by the property that the limsup of a sequence of sets is unchanged by deleting a finite number of elements of the sequence, and hence deleting the first $k-1$ sets does not affect the limsup. On line $4$ we use the fact that the $H^{p,q}$ sequence is nested (see $(iv)$ of Lemma \ref{construction of Gpq sets}). The final line is achieved by applying $(iii)$ of Lemma \ref{construction of Gpq sets} to the $H^{p,q}$ sets which, by our choice of $\Tilde{c}_i$ and $\Tilde{d}_i$ do not intersect each other. As the value of $k$ was arbitrary, we deduce $\mu(\limsup_{i \rightarrow \infty }K_i^\delta)=1$. Therefore, $\mu(\limsup_{i \rightarrow \infty }B_i)=1$, this proves $(iii)$ of the lemma.
\end{proof}
We examine the consequences of Lemma $\ref{Ai lemma}$ and $\ref{Bi lemma}$ for our original problem. The inequalities in Lemmas \ref{Ai lemma} and \ref{Bi lemma} mean that to satisfy $(i)$ and $(ii)$ of Theorem \ref{the big theorem} it suffices to find appropriate values of $c_i, \Tilde{c_i}, \delta, p_i$ and $q_i$ that satisfy the following set of inequalities for $1 \leq r \leq m$.

\noindent For $r$ odd:
\begin{align}
\label{r odd problem}
     &c_1 \frac{p_1^r}{i_r q_1^r} + c_2 \frac{p_2^r}{i_r q_2^r} + ... + c_m  \frac{p_m^r}{i_r q_m^r}
    \geq
    \Tilde{c}_1 \frac{p_1^r}{i_r q_1^r} + \Tilde{c}_2 \frac{p_2^r}{i_r q_2^r} + ... + \Tilde{c}_k  \frac{p_m^r}{i_r q_m^r}+ \frac{\delta}{i_r}.
    \end{align}
    For $r$ even:
    \begin{align}
\label{r even number}
     &c_1 \frac{p_1^r}{i_r q_1^r} + c_2 \frac{p_2^r}{i_r q_2^r} + ... + c_m  \frac{p_m^r}{i_r q_m^r}
    \leq
    \Tilde{c}_1 \frac{p_1^r}{i_r q_1^r} + \Tilde{c}_2 \frac{p_2^r}{i_r q_2^r} + ... + \Tilde{c}_m  \frac{p_m^r}{i_r q_m^r}.
\end{align}

\subsubsection{Picking adequate constants to satisfy (\texorpdfstring{$\ref{r odd problem}$}{r odd problem}) and (\texorpdfstring{$\ref{r even number}$}{r even number}) }
    For $1 \leq i \leq m$, set $q_i=10^{(m+i-1)!}$ for $m \in \mathbb{N}$ as in the statement of Theorem \ref{the big theorem}. We will take $p_1=p_2=...=p_m=1$. Our choice of $p_i$ and $q_i$ ensures that the powers of $\frac{p_i}{q_i}$ will be of drastically different orders of magnitude. Now we see that we can rephrase $(\ref{r odd problem})$ and $(\ref{r even number})$ in terms of these choices of $p_i$ and $q_i$ and in a matrix form.

For two vectors $\mathbf{\underline{v}},\mathbf{\underline{w}} \in \mathbb{R}^m$ with $\mathbf{\underline{v}}= (v_1,...,v_m)$ and $\mathbf{\underline{w}}= (w_1,...,w_m)$ we will write $\mathbf{\underline{v}} \leq \mathbf{\underline{w}}$ if $v_i \leq w_i$ for all $1 \leq i \leq m$. First, we define the following $m \times m$ matrix and $m \times 1$ column vectors,
\[ M:=\begin{pmatrix} 
    -\dfrac{1}{10^{m!}} & - \dfrac{1}{10^{(m+1)!}} & \hdots & -\dfrac{1}{10^{(2m-1)!}} \\[1em]
    \dfrac{1}{10^{2\times m!}}& \dfrac{1}{10^{2 \times(m+1)!}} &  \hdots & \dfrac{1}{10^{2 \times (2m-1)!}} \\
    \vdots & \vdots & \ddots & \vdots \\
    \dfrac{(-1)^m}{10^{m \times m!}} & \dfrac{(-1)^m}{10^{m \times (m+1)!}} & \hdots & \dfrac{(-1)^m}{10^{m \times (2m-1)!}} 
    \end{pmatrix}, \underline{\text{\boldmath$\delta$}}:=
    \begin{pmatrix}
        \delta \\
        0 \\
        \delta \\
        0 \\
        \delta \\
        \vdots \\
    \end{pmatrix},
\underline{\mathbf{c}}:=\begin{pmatrix}
        c_1 \\
        c_2\\
        c_3 \\
        \vdots \\
        c_m \\
    \end{pmatrix},
    \hspace{5pt}
    \text{and } \underline{\Tilde{\mathbf{c}}}:=
    \begin{pmatrix}
        \Tilde{c}_1 \\
        \Tilde{c}_2\\
        \Tilde{c}_3 \\
        \vdots \\
        \Tilde{c}_m \\
    \end{pmatrix}.
    \]


Noting that we can cancel the $i_r$ terms in (\ref{r odd problem}) and (\ref{r even number}), we can use $M, \underline{\Tilde{\mathbf{c}}}, \underline{\mathbf{c}},$ and $\underline{\text{\boldmath$\delta$}}$ to rephrase (\ref{r odd problem}) and (\ref{r even number}) as the vector inequality
\begin{align}\label{matrix inequality}
    M \underline{\mathbf{c}} \leq M  \underline{\mathbf{\Tilde{c}}}- \underline{\text{\boldmath$\delta$}}.
\end{align}
The remainder of the proof then breaks down into picking $\delta, c_i$ and $\Tilde{c}_i$ to ensure $\eqref{matrix inequality}$ is satisfied, which we show that can be done in the following lemma.

    \begin{lemma} \label{matrix lemma}
            There exists a positive real number $\delta$ and positive $c_i$, and $\Tilde{c}_i$ for $1 \leq i \leq m$ that satisfy \eqref{matrix inequality}.
        \end{lemma}

\begin{proof}
    
We will show the elements along the lower-left to upper-right diagonal to be the largest in their respective rows (when multiplied by their respective $c_i$). Specifically, for the row $a$ (where $1 \leq a \leq m)$, $M_{a,m-a+1}c_{m-a+1}$ should be significantly larger than any of the corresponding terms in this row. Therefore, for row $a$, our choice of $c_a$ and $\Tilde{c}_a$ will determine the direction of the inequality for row $a$.

To this end, for $2 \leq n \leq m$, we define $\gamma_n$ and $\alpha_n$ by
\begin{align*}
 \gamma_n&= \sum_{i=1}^{n-1}(m-\frac{1}{2} -(i-1)) ((
m+i)!-(m+i-1)!), \text{ and} \\\alpha_n &= \frac{1}{2}((m+n-1)!-(m+n-2)!).
\end{align*}

We set $\Tilde{c}_i$ to have the same magnitude as $c_i$, but multiply by 2 or $1/2$ to ensure that the matrix inequality holds. To this end, we have:
\begin{align*}
    c_1:=2 \text{ and } c_i:= \frac{3+(-1)^{i+1}}{2} \times 10^{\gamma_i} \text{ for } 2 \leq i \leq m ,\\
    \Tilde{c}_1:=1 \text{ and } \Tilde{c}_i:= \frac{3+(-1)^{i}}{2} \times 10^{\gamma_i} \text{ for } 2 \leq i \leq m .
\end{align*}
 This ensures $\Tilde{c}_i=2c_i$ if $i$ is even and $c_i=2\Tilde{c}_i$ if $i$ is odd. 
 We pick $\delta$ to be as follows
\[\delta:= \min_{\substack{1 \leq a \leq m \\ 1\leq r \leq m}} |M_{a,r} c_r| .\]
These choices of $c_i$ and $\Tilde{c}_i$ ensure that the lower left to upper right diagonal terms are larger than any of the other terms in their respective rows. To see this, we begin by showing that for each row, the diagonal term is larger than its adjacent terms in the row. That is,
\begin{align}\label{desired inequalities to satisfy}
\begin{split}
    |M_{m-a+1,a+1} c_{a+1}| &\leq 10^{-\alpha_a} |M_{m-a+1,a} c_{a}| \text{ for } 1 \leq a \leq m-1 ,\\
    |M_{m-a+1,a-1} c_{a-1}|&=10^{-\alpha_a} |M_{m-a+1,a} c_{a}| \text{ for } 2 \leq a \leq m .
\end{split}
\end{align}
We begin with $1 \leq a \leq m-1$ and show the first inequality
\begin{align*}
    \frac{|M_{m-a+1,a} c_{a}| }{| M_{m-a+1,a+1} c_{a+1}|}
    &=\frac{10^{-(m-a+1) \times (m+a-1)!} 10^{\gamma_a}}{ 10^{-(m-a+1) \times (m+a)!}10^{\gamma_{a+1}} }\\
    &= \frac{10^{(m-a+1)\times((m+a)!-(m+a-1)!)}}{10^{\gamma_{a+1}-\gamma_a}} \\
    &=\frac{10^{(m-a+1)\times((m+a)!-(m+a-1)!)} }{ 10^{(m-\frac{1}{2}-(a-1))\times((m+a)!-(m+a-1)!)}} \\
    &=10^{\frac{1}{2}((m+a)!-(m+a-1)!)} \\
    &=10^{\alpha_{a+1}} \geq 10^{\alpha_{a}} .
\end{align*}
For $2\leq a \leq m$ note that
\begin{align*}
    \frac{|M_{m-a+1,a} c_{a}| }{| M_{m-a+1,a-1} c_{a-1}|} &= \frac{10^{-(m-a+1) \times (m+a-1)!} 10^{\gamma_a} }{ 10^{-(m-a+1) \times (m+a-2)!}   10^{\gamma_{a-1}} } \\
    &= \frac{10^{\gamma_{a}-\gamma_{a-1}}}{10^{(m-a+1) \times ((m+a-1)!-(m+a-2)!)} }  \\
    &=\frac{10^{(m-\frac{1}{2}-(a-2))\times((m+a-1)!-(m+a-2)!)} }{ 10^{(m-a+1) \times ((m+a-1)!-(m+a-2)!)}} \\
    &= 10^{\frac{1}{2}((m+a-1)!-(m+a-2)!)} \\
    &=10^{\alpha_a} .
\end{align*}
The second part of \eqref{desired inequalities to satisfy} follows from this chain of equalities. Now we have shown that the diagonal terms are larger than their row adjacent terms, and we now show that this implies that they are larger than all terms in the row. We note that if $|M_{a,r}c_r|>|M_{a,s} c_s|$, where $1 \leq r<s\leq m$ and $1 \leq a \leq m-1$, since the column $s$ of the matrix $M$ shrinks faster than the column $r$, we have $|M_{a+1,r}c_r| > |M_{a+1,s} c_s|$. We precisely justify this as follows:
\begin{align*}
    |M_{a+1,r}c_r|&=10^{-(b+r-1)!} |M_{a,r} \times c_r | \\
    &>10^{-(b+s-1)!} |M_{a,r} \times c_r | \\
     &>10^{-(b+s-1)!} |M_{a,s}  \times c_s |=|M_{a+1,s}c_s|.
\end{align*}
Also, for $2 \leq a \leq m$ if $|M_{a,r}c_r|<|M_{a,s} c_s|$ (again for $1 \leq r<s \leq m$) by similar logic $|M_{a-1,r}c_r|<|M_{a-1,s} c_s|$. This ensures that the terms in the row, excluding the non-diagonal term, can be bounded by the terms adjacent to the diagonal term. Along with \eqref{desired inequalities to satisfy}, where we see the diagonal terms bound their adjacent terms, we obtain
\begin{align*}
    \sum_{i \in \{1,2,...,m\}^{}
    \backslash a} |M_{m-a+1,i}{c_i}| +\delta &\leq m\max \{M_{m-a+1,a+1}c_{a+1},M_{m-a+1,a-1}c_{a-1} \} \\
    &\leq m 10^{-\alpha_a}  |M_{m-a+1,a}c_a| \\ &\leq m 10^{-\frac{1}{2}(m+a-1)} |M_{m-a+1,a}c_a|
     \\&< \frac{1}{4}|M_{m-a+1,a}c_a|.
\end{align*}

Thus, $M_{m-a+1,a}c_a$ dominates the sum of the rest of the row. The same proof ensures the right hand side of \eqref{matrix inequality} can be bounded in the same way and we see that $M_{m-a+1,a}\Tilde{c_a}$ dominates the row $a$ of the right hand side of \eqref{matrix inequality}.
Combining all this together, we see that the row $a$ (for $1 \leq a \leq m$) of the left hand side of $\eqref{matrix inequality}$ can be bounded above and below by
\begin{align*}
    \frac{3}{4} |M_{m-a+1,a}c_a| <(M \underline{\mathbf{c}})_a < \frac{5}{4} |M_{m-a+1,a}c_a|.
\end{align*}
Similarly row $a$ of the right hand side of $\eqref{matrix inequality}$ can be bounded by
\begin{align*}
    \frac{3}{4} |M_{m-a+1,a}\Tilde{c}_a| <(M \underline{\mathbf{\Tilde{c}}}+\underline{\text{\boldmath$\delta$}})_a < \frac{5}{4} |M_{m-a+1,a} \Tilde{c}_a|.
\end{align*}
Therefore, the inequality in this row is controlled by our choice of $c_a$ and $\Tilde{c_a}$ from earlier. This ensures the matrix inequality \eqref{matrix inequality} holds true for the prescribed $\delta$ and $c_i, \Tilde{c}_i$ for $1 \leq i \leq m$ as desired. \qedhere

\end{proof}

\begin{proof}[Proof of Theorem \ref{the big theorem}]
    The sequences $(A_i)_{i \in \mathbb{N}}$ and $(B_i)_{i \in \mathbb{N}}$ defined in this chapter are the sequences satisfying the conditions of the theorem. By Lemma \ref{matrix lemma} we have shown that there exist constants ensuring the matrix inequality, and thus expressions (\ref{r odd problem}) and (\ref{r even number}) are satisfied. We can shrink all of the $c_i$ and $\Tilde{c_i}$ by equal factors if necessary to ensure that the sets lie entirely within $[0,1]$ without affecting the desired inequalities of (\ref{r odd problem}) and (\ref{r even number}). This proves parts $(i)$ and $(ii)$ of Theorem \ref{the big theorem}. We proved that $\mu(\limsup_{i \rightarrow \infty} A_i)=0$ and $\mu(\limsup_{i \rightarrow \infty} B_i)=1$ by properties $(iii)$ and $(iv)$ in Lemmas $\ref{Ai lemma}$ and \ref{Bi lemma} so have demonstrated the existence of two sequences satisfying the conditions of Theorem \ref{the big theorem}.
\end{proof}

\begin{remark}
   We see that Theorem \ref{the big theorem} is not specific to the 1 dimensional Lebesgue measure. Analogous examples can be constructed in higher dimensions by taking the sets that satisfy Theorem $\ref{the big theorem}$ in dimension $1$ and appending `$\times [0,1]^{n-1}$' at the end of the sets, and we have ourselves an example in $\mathbb{R}^n$.

   \end{remark}


\bibliographystyle{alpha}
\bibliography{sample}

\section*{Appendix}

Here we include figures of the first 15 sets of $(A_i)_{i \in \mathbb{N}}$ and $(B_i)_{i \in \mathbb{N}}$ of Theorem \ref{the big theorem} (in the case $c=1$ and $m=2$) to aid the readers understanding of the construction and see that the measures restrictions on the sets and intersections is satisfied. Figure \ref{leb construction equal Bi} visibly show how the sequence $(B_i)_{i \in \mathbb{N}}$ `spreads out' across the entire interval ensuring that the limsup will be of full measure. Alternatively, Figure \ref{leb construction equal Ai} shows the sequence $(A_i)_{i \in \mathbb{N}}$ neglects entire intervals.

\begin{figure*}[h!]
\renewcommand{\thefigure}{A1}
    \caption{Example of the first 15 terms of the sequence $(A_i)_{i \in \mathbb{N}}$ in Theorem \ref{the big theorem} for $m=2$ and $c=1$}
    \label{leb construction equal Ai}
\begin{center}
    \begin{tikzpicture}[yscale=0.5]
    \draw[very thick] (0,5) -- (12,5);
    \draw (0,4) -- (12/8*4,4);
    \draw (0,3) --(2*12/8,3);
    \draw (4*12/8,3) --(6*12/8,3);
    \draw (2*12/8,2) -- (4*12/8,2);
    \draw (4*12/8,2) -- (6*12/8,2);

    \draw (0,1) -- (1.5,1);
\draw (0,0) -- (0.75,0);
\draw (1.5,0) -- (2.25,0);
\draw (0.75,-1) -- (1.5,-1);
\draw (1.5,-1) -- (2.25,-1);
\draw (3,1) -- (4.5,1);
\draw (3,0) -- (3.75,0);
\draw (4.5,0) -- (5.25,0);
\draw (3.75,-1) -- (4.5,-1);
\draw (4.5,-1) -- (5.25,-1);

\draw (0,-2) -- (1.5,-2);
\draw (0,-3) -- (0.75,-3);
\draw (1.5,-3) -- (2.25,-3);
\draw (0.75,-4) -- (1.5,-4);
\draw (1.5,-4) -- (2.25,-4);
\draw (6,-2) -- (7.5,-2);
\draw (6,-3) -- (6.75,-3);
\draw (7.5,-3) -- (8.25,-3);
\draw (6.75,-4) -- (7.5,-4);
\draw (7.5,-4) -- (8.25,-4);

\draw (6,-5) -- (7.5,-5);
\draw (6,-6) -- (6.75,-6);
\draw (7.5,-6) -- (8.25,-6);
\draw (6.75,-7) -- (7.5,-7);
\draw (7.5,-7) -- (8.25,-7);
\draw (3,-5) -- (4.5,-5);
\draw (3,-6) -- (3.75,-6);
\draw (4.5,-6) -- (5.25,-6);
\draw (3.75,-7) -- (4.5,-7);
\draw (4.5,-7) -- (5.25,-7);

\draw (0,-8) -- (0.375,-8);
\draw (0,-9) -- (0.1875,-9);
\draw (0.375,-9) -- (0.5625,-9);
\draw (0.1875,-10) -- (0.375,-10);
\draw (0.375,-10) -- (0.5625,-10);
\draw (0.75,-8) -- (1.125,-8);
\draw (0.75,-9) -- (0.9375,-9);
\draw (1.125,-9) -- (1.3125,-9);
\draw (0.9375,-10) -- (1.125,-10);
\draw (1.125,-10) -- (1.3125,-10);

\draw (3,-8) -- (3.375,-8);
\draw (3,-9) -- (3.1875,-9);
\draw (3.375,-9) -- (3.5625,-9);
\draw (3.1875,-10) -- (3.375,-10);
\draw (3.375,-10) -- (3.5625,-10);
\draw (3.75,-8) -- (4.125,-8);
\draw (3.75,-9) -- (3.9375,-9);
\draw (4.125,-9) -- (4.3125,-9);
\draw (3.9375,-10) -- (4.125,-10);
\draw (4.125,-10) -- (4.3125,-10);

    \node at (-1/2,5) {[0,1]};
    \node at (-1/2,4) {$A_1$};
    \node at (-1/2,3) {$A_2$};
    \node at (-1/2,2) {$A_3$};
    \node at (-1/2,1) {$A_4$};
    \node at (-1/2,0) {$A_5$};
    \node at (-1/2,-1) {$A_6$};
    \node at (-1/2,-2) {$A_7$};
    \node at (-1/2,-3) {$A_8$};
    \node at (-1/2,-4) {$A_9$};
    \node at (-1/2,-5) {$A_{10}$};
    \node at (-1/2,-6) {$A_{11}$};
    \node at (-1/2,-7) {$A_{12}$};
    \node at (-1/2,-8) {$A_{13}$};
    \node at (-1/2,-9) {$A_{14}$};
    \node at (-1/2,-10) {$A_{15}$};
    
    \node at (0,1/2-11.5) {0};
    \node at (12/8,1/2-11.5) {$\frac{1}{8}$};
    \node at (24/8,1/2-11.5) {$\frac{2}{8}$};
    \node at (3*12/8,1/2-11.5) {$\frac{3}{8}$};
    \node at (4*12/8,1/2-11.5) {$\frac{4}{8}$};
    \node at (5*12/8,1/2-11.5) {$\frac{5}{8}$};
    \node at (6*12/8,1/2-11.5) {$\frac{6}{8}$};
    \node at (7*12/8,1/2-11.5) {$\frac{7}{8}$};
    \node at (12,1/2-11.5) {1};
    \end{tikzpicture}
\end{center}
\end{figure*}

\begin{figure}[h!]
\renewcommand{\thefigure}{A2}
    \caption{Example of the first 15 terms of the sequence $(B_i)_{i \in \mathbb{N}}$ in Theorem \ref{the big theorem} for $m=2$ and $c=1$}
    \label{leb construction equal Bi}
\begin{center}
    \begin{tikzpicture}[yscale=0.5]
    \draw[very thick] (0,5) -- (12,5);
    \draw (0,4) -- (6,4);
\draw (0,3) -- (3,3);
\draw (6,3) -- (9,3);
\draw (0,2) -- (3,2);
\draw (9,2) -- (12,2);

\draw (0,1) -- (1.5,1);
\draw (0,0) -- (0.75,0);
\draw (1.5,0) -- (2.25,0);
\draw (0,-1) -- (0.75,-1);
\draw (2.25,-1) -- (3,-1);
\draw (3,1) -- (4.5,1);
\draw (3,0) -- (3.75,0);
\draw (4.5,0) -- (5.25,0);
\draw (3,-1) -- (3.75,-1);
\draw (5.25,-1) -- (6,-1);

\draw (0,-2) -- (1.5,-2);
\draw (0,-3) -- (0.75,-3);
\draw (1.5,-3) -- (2.25,-3);
\draw (0,-4) -- (0.75,-4);
\draw (2.25,-4) -- (3,-4);
\draw (6,-2) -- (7.5,-2);
\draw (6,-3) -- (6.75,-3);
\draw (7.5,-3) -- (8.25,-3);
\draw (6,-4) -- (6.75,-4);
\draw (8.25,-4) -- (9,-4);

\draw (0,-5) -- (1.5,-5);
\draw (0,-6) -- (0.75,-6);
\draw (1.5,-6) -- (2.25,-6);
\draw (0,-7) -- (0.75,-7);
\draw (2.25,-7) -- (3,-7);
\draw (9,-5) -- (10.5,-5);
\draw (9,-6) -- (9.75,-6);
\draw (10.5,-6) -- (11.25,-6);
\draw (9,-7) -- (9.75,-7);
\draw (11.25,-7) -- (12,-7);

\draw (0,-8) -- (0.375,-8);
\draw (0,-9) -- (0.1875,-9);
\draw (0.375,-9) -- (0.5625,-9);
\draw (0,-10) -- (0.1875,-10);
\draw (0.5625,-10) -- (0.75,-10);
\draw (0.75,-8) -- (1.125,-8);
\draw (0.75,-9) -- (0.9375,-9);
\draw (1.125,-9) -- (1.3125,-9);
\draw (0.75,-10) -- (0.9375,-10);
\draw (1.3125,-10) -- (1.5,-10);

\draw (3,-8) -- (3.375,-8);
\draw (3,-9) -- (3.1875,-9);
\draw (3.375,-9) -- (3.5625,-9);
\draw (3,-10) -- (3.1875,-10);
\draw (3.5625,-10) -- (3.75,-10);
\draw (3.75,-8) -- (4.125,-8);
\draw (3.75,-9) -- (3.9375,-9);
\draw (4.125,-9) -- (4.3125,-9);
\draw (3.75,-10) -- (3.9375,-10);
\draw (4.3125,-10) -- (4.5,-10);

    \node at (-1/2,5) {[0,1]};
    \node at (-1/2,4) {$B_1$};
    \node at (-1/2,3) {$B_2$};
    \node at (-1/2,2) {$B_3$};
    \node at (-1/2,1) {$B_4$};
    \node at (-1/2,0) {$B_5$};
    \node at (-1/2,-1) {$B_6$};
    \node at (-1/2,-2) {$B_7$};
    \node at (-1/2,-3) {$B_8$};
    \node at (-1/2,-4) {$B_9$};
    \node at (-1/2,-5) {$B_{10}$};
    \node at (-1/2,-6) {$B_{11}$};
    \node at (-1/2,-7) {$B_{12}$};
    \node at (-1/2,-8) {$B_{13}$};
    \node at (-1/2,-9) {$B_{14}$};
    \node at (-1/2,-10) {$B_{15}$};
    
    \node at (0,1/2-11.5) {0};
    \node at (12/8,1/2-11.5) {$\frac{1}{8}$};
    \node at (24/8,1/2-11.5) {$\frac{2}{8}$};
    \node at (3*12/8,1/2-11.5) {$\frac{3}{8}$};
    \node at (4*12/8,1/2-11.5) {$\frac{4}{8}$};
    \node at (5*12/8,1/2-11.5) {$\frac{5}{8}$};
    \node at (6*12/8,1/2-11.5) {$\frac{6}{8}$};
    \node at (7*12/8,1/2-11.5) {$\frac{7}{8}$};
    \node at (12,1/2-11.5) {1};
    \end{tikzpicture}
\end{center}
\end{figure}

\end{document}